\numberwithin{equation}{section}
\theoremstyle{plain}
\newtheorem{theorem}{Theorem}[section]
\numberwithin{theorem}{section}
\newtheorem{corollary}[theorem]{Corollary}
\newtheorem{proposition}[theorem]{Proposition}
\newtheorem{definition}[theorem]{Definition}
\newtheorem{example}[theorem]{Example}
\newtheorem{remark}[theorem]{Remark}
\newcommand{\cD}{\mathcal{D}}
\newcommand{\cF}{\mathcal{F}}
\newcommand{\NN}{\mathbb{N}}
\newcommand{\RR}{\mathbb{R}}
\newcommand{\ind}{\mathbbm{1}}
\DeclareMathOperator{\supp}{supp}
\DeclareMathOperator{\id}{id}
\title{Comparison of path-dependent functionals of semimartingales}
\author{Benedikt K\"opfer\footnote{A LGFG grant of the state Baden-W\"urttemberg is gratefully acknowledged}, Ludger R\"uschendorf}
\date{}
\begin{document}

\maketitle
\thispagestyle{empty}

\begin{abstract}
Based on an extension of the martingale comparison method some comparison results for path-dependent functions of semimartingales are established. The proof makes essential use of the functional It\^o calculus. A main tool is an extension of the Kolmogorov backwards equation to path-dependent functions. The paper also derives criteria for the regularity conditions of the comparison theorems and discusses applications as to the comparison of Asian options for semimartingale models.
\end{abstract}

\renewcommand{\thefootnote}{}
\footnotetext{\hspace*{-.51cm}%
AMS 2010 subject classification:
Primary: 60E15; secondary: 60E44, 60G51\\
Key words and phrases: Path-dependent ordering, ordering of semimartingales, Kolmogorov backwards equation, functional It\^o calculus.}

{\renewcommand{\thefootnote}{\arabic{footnote}}

\section{Introduction}
\label{sec:intro}

The main subject of this paper is to give an extension of ordering results for path-independent functions of semimartingales based on the martingale method to path-dependent functions. The martingale comparison method was introduced for the comparison of path-independent functions of semimartingales in \citet{KJS98} and \citet{BJ00}. It was then systematized and extended in \citet{GM02}, \citet{BR06,BR07a,BR07b,BR08} and in \citet{KR19}. Essentially a comparison of local (differential) characteristics and the 'propagation of order' property yield, under the condition that the propagation operator (the value process) satisfies a Kolmogorov backwards equation, a comparison of terminal values.

In particular in \citet{BR06,BR07a} and \citet{KR19} general versions of the Kolmogorov backwards equation for path-independent functionals have been established and applied to ordering results for semimartingales w.r.t. various kinds of orderings as motivated by the problem to establish price bounds resp. risk bounds in some general class of insurance resp. financial models. Some alternative approaches to related comparison results are given in \citet{GM94}, \citet{KPQ97}, \citet{Ho98}, \citet{Zh04}, \citet{SGL05}, \citet{PZ06}, \citet{Kl06}, \citet{Ar08}, \citet{WX09}, \citet{Ma10} and \citet{Cr19}.

For the extension to the ordering of path-dependent functions we make essential use of the functional It\^o calculus and in particular of the functional It\^o formula, see \citet{BC16}. In Section \ref{sec:funcito} some necessary notions and results of this theory are collected. The functional It\^o formula allows us to extend the basic Kolmogorov backward equation to the path-dependent framework. As a consequence we are able to derive comparison results for path-dependent functions under equivalent martingale measures as well as w.r.t. semimartingale measures. We also discuss the regularity conditions of the comparison theorems and discuss applications as to the the comparison of Asian options for semimartingales. For further details and extensions of the comparison method we refer to the dissertation \citet{Ko19} on which this paper is based.

\section{Functional It\^o calculus}
\label{sec:funcito}

In this section we recall some of the basic notions and results of the functional It\^o calculus. This is the main tool for the extension of the martingale comparison method, to the frame of path-dependent functionals. The functional It\^o calculus was introduced by \citet{Du09} and developed since then; see \citet{CF10a,CF10b}, \citet{Le13}, \citet{BC16} and \citet{AC17}. A comprehensive presentation  on which this section is based is given in \citet{BC16}.

For the functional calculus a set of suitable functions and an appropriate notion of derivative is needed. Let $X$ be the canonical process on the space of c\`adl\`ag functions $\Omega = D([0,T],\RR^d)$ and $(\mathcal{F}_t)_{t \in [0,T]}$ be the filtration generated by it. Then any adapted real-valued process $Y = (Y_t)_{t \in [0,T]}$ may be represented as family of functionals $Y(t,\cdot): \Omega \to \RR$, such that $Y(t,\cdot)$ only depends on the path stopped at $t$, i.e. $Y(t,\omega) = Y(t,\omega_{\cdot \wedge t})$. Therefore we can view an adapted process as functional on the space of ``stopped paths''. In the sequel we use the notation $\omega^t_\cdot := \omega_{\cdot \wedge t}$ for the path stopped at $t$. More formally a stopped path is an equivalence class in $([0,T] \times D([0,T], \RR^d)$ for the following equivalence relation
\begin{align*}
(t,\omega) \sim (s, \tilde{\omega}) \Leftrightarrow t = s~ \text{and}~ \omega^t = \tilde{\omega}^s.
\end{align*}
The space of \emph{stopped paths} is defined as the quotient of $[0,T] \times D([0,T], \RR^d)$ by the above equivalence relation:
\begin{align*}
\Lambda^d_T := \{ (t, \omega^t); (t, \omega) \in [0,T] \times D([0,T], \RR^d) \} = [0,T] \times D([0,T], \RR^d)/\sim.
\end{align*}
The space of stopped paths is a complete metric space for the metric
\begin{align*}
d_\infty((t, \omega^t),(s, \tilde{\omega}^s)) :=&~ \sup_{u \in [0,T]} |\omega_{u \wedge t} - \tilde{\omega}_{u \wedge s}| + |t - s|\\
=&~ \| \omega^t - \tilde{\omega}^{s} \|_\infty + |t - s|.
\end{align*}
In the sequel we write $(t,\omega)$ since it is clear from the first variable at which point in time the path is stopped. If the path is stopped at a certain point prior to $t$ or if we want to emphasize that the path runs until $t$, we use the notation $(t,\omega^t)$.

The class of \emph{non-anticipative functionals} is defined as follows: A non-anticipative functional on $D([0,T], \RR^d)$ is a measurable map $F: (\Lambda^d_T, d_\infty) \to (\RR,\mathfrak{B}(\RR))$. The notion ``non-anticipative'' describes a functional on the path space which only depends of past values. As mentioned in \citet{BC16}, every progressively measurable process can be represented as a non-anticipative functional and conversely.

To define a suitable class of non-anticipative functionals for a path dependent It\^o formula, some regularity properties are needed, in particular the notion of continuity. Continuity of a non-anticipative functional $F: \Lambda^d_T \to \RR$ is defined as continuity as function between the metric spaces $(\Lambda^d_T,d_\infty)$ and $(\RR,|\cdot|)$. Let $C^{0,0} (\Lambda^d_T)$ denote the set of all continuous non-anticipative functionals. A weaker concept is \emph{continuity at fixed times}, i. e. for all $t \in [0,T)$ the map $F(t,.) : (D([0,T], \RR^d), \| \cdot \|_\infty) \to (RR,|\cdot|)$ is continuous. $F$ is called \emph{left-continuous} if $F$ is continuous at fixed times and the following holds
\begin{align*}
&\forall (t,\omega) \in \Lambda^d_T,~ \forall \varepsilon > 0, \exists \delta > 0,~ \forall (s,\tilde{\omega}) \in \Lambda^d_T,\\
&\big[ s < t~ \text{and}~ d_\infty((t, \omega), (s,\tilde{\omega})) < \delta \big] \Rightarrow |F(t,\omega) - F(s,\tilde{\omega})| < \delta.
\end{align*}
The set of all left-continuous non-anticipative functionals is denoted by $C^{0,0}_l(\Lambda^d_T)$.

The property of being \emph{boundedness preserving} is crucial for various results in \citet{BC16} and a precondition for the functional It\^o's formula. A non-anticipative functional $F: \Lambda^d_T \to \RR$ is called boundedness preserving if for any compact $K \subset \RR^d$ and $t_0 < T$ holds
\begin{align*}
\exists C_{K,t_0} > 0, \forall t \le t_0, \forall \omega \in D([0,T],\RR^d), \omega([0,t]) \subset K \Rightarrow |F(t, \omega)| \le C_{K,t_0}.
\end{align*}
Denote by $B(\Lambda^d_T)$ the set of boundedness preserving functionals and by $C^{0,0}_b$ the set of continuous boundedness preserving functionals. 

The derivatives which are used for the functional It\^o calculus are the horizontal and the vertical derivative. For the horizontal derivative, a stopped path $(t,\omega) \in \Lambda^d_T$ is extended to the interval $[0,t+h]$ by its value at time $t$, i.e. to $(t+h,\omega^t)$.

\begin{definition}
\label{def:horizontalderivative}
A non-anticipative functional $F: \Lambda^d_T \to \RR$ is said to be \emph{horizontally differentiable} at $(t,\omega) \in \Lambda^d_T$ if the following limit exists
\begin{align*}
\mathcal{D} F(t,\omega) = \lim_{h \downarrow 0} \frac{F(t+h,\omega^t) - F(t,\omega^t)}{h}.
\end{align*}
If $F$ is horizontally differentiable at all $(t,\omega) \in \Lambda^d_T$, then $\mathcal{D} F$ is a non-anticipative functional, called the \emph{horizontal derivative} of $F$.
\end{definition}

For the vertical derivative, the stopped path at the stopping point is disturbed by a constant $x \in \RR^d$. For a path $\omega \in D([0,T],\RR^d)$ we denote the disturbed path by $\omega^{x,t} := \omega^t + x \ind_{[t,T]}$.

\begin{definition}
\label{def:verticalderivative}
A non-anticipative functional $F: \Lambda^d_T \to \RR$ is said to be \emph{vertically differentiable} at $(t,\omega) \in \Lambda^d_T$ if the map
\begin{align*}
\RR^d &\to \RR\\
x &\mapsto F(t,\omega^{x,t})
\end{align*}
is differentiable in $0$. Its gradient at $0$ is called the \emph{vertical derivative} of $F$ at $(t,\omega)$:
\begin{align*}
\nabla_\omega F(t,\omega) =&~ (\nabla_{\omega i} F(t,\omega), i = 1, \dots, d),
\end{align*}
where for the standard base $(e_i)_{1 \le i \le d}$ of $\RR^d$ the derivatives are defined by
\begin{align*}
\nabla_{\omega i} F(t,\omega) =&~ \lim_{h \to 0} \frac{F(t,\omega^t + h e_i \ind_{[t,T]}) - F(t,\omega^t)}{h}.
\end{align*}
If $F$ is vertically differentiable at all $(t,\omega) \in \Lambda^d_T$, then $\nabla_\omega F$ is a non-anticipative functional called the vertical derivative of $F$. 
\end{definition}

For each $x \in \RR^d$, $\nabla_\omega F(t,\omega).x$ is the directional derivative of $F(t,.)$ in direction $\ind_{[t,T]} x$. As usual one may differentiate multiple times, if possible; we denote this by a superscript, $\nabla_\omega^2, \dots, \nabla_\omega^k$. Note that even if considering only continuous paths, one still has to use $\Lambda_T^d$ for the definition of vertical differentiability to make sense.

For example the non-anticipative functional $F(t,\omega) = f(t,\omega_t)$ with $f \in C^{1,1}([0,T] \times \RR^d)$ has horizontal and vertical derivatives which are simply the partial (right-) derivatives of $f$. Thus, Definitions \ref{def:horizontalderivative} and \ref{def:verticalderivative} are an extension of the notion of partial derivatives.

The next definition introduces a class of regular non-anticipative functionals which is suitable for a path-wise It\^o formula.

\begin{definition}
\label{def:regularitypath}
Define $C^{1,2}_b(\Lambda^d_T)$ as the set of left-continuous non-anticipative functionals $F \in C^{0,0}_l(\Lambda^d_T)$ such that
\begin{itemize}
\item[--] $F$ is horizontally differentiable at all points $(t,\omega) \in \Lambda^d_T$ and $\mathcal{D} F$ is continuous at fixed times;

\item[--] $F$ is twice vertically differentiable and $\nabla_\omega F, \nabla_\omega^2 F \in C^{0,0}_l$;

\item[--] $\mathcal{D} F, \nabla_\omega F, \nabla_\omega^2 F \in B(\Lambda^d_T)$.
\end{itemize}
\end{definition}

In \citet{BC16} it is pointed out that one might use as well right continuity. To apply the pathwise calculus to semimartingales, we use the left-continuity such that the integrands in the pathwise It\^{o} formula are predictable. For the following examples of horizontally and vertically differentaible functionals, see \citet{BC16}.

\begin{example}
\label{ex:exampleregularintroductfuncito}
\begin{enumerate}
\item Let $g \in C^0(\RR^d)$ and $\rho : \RR_+ \to \RR$ be bounded and measurable. Then a non-anticipative functional in $C^{1,\infty}_b(\Lambda^d_T)$ is given by
\begin{align*}
F(t,\omega) := \int_0^t g(\omega_s) \rho(s) ds.
\end{align*}
The horizontal derivative is given by $\cD F(t,\omega) = g(\omega_t) \rho(t)$ and the vertical derivative is $\nabla_{\omega i} F(t,\omega) = 0$. 

\item Let $0 < t_1 < \dots < t_n$ be some points in $[0,T]$, $g \in C^0(\RR^{n \times d})$ and $h \in C^k(\RR^k)$ with $h(0) = 0$. Then 
\begin{align*}
F(t,\omega) = h(\omega_t - \omega_{t_n^-}) \ind_{t \ge t_n} g(\omega_{t_1^-},\omega_{t_2^-}, \dots , \omega_{t_n^-})
\end{align*}
is of class $C_b^{1,k}(\Lambda_T^d)$. The horizontal derivative is $\cD F(t,\omega) = 0$ and the vertical derivative is $\nabla_{\omega i} F(t,\omega) = \partial_i h(\omega_t - \omega_{t_n^-}) \ind_{t \ge t_n} g(\omega_{t_1^-},\omega_{t_2^-}, \dots , \omega_{t_n^-})$.
\end{enumerate}
\end{example}

Definition \ref{def:regularitypath} can be extended by localization.

\begin{definition}
A non-anticipative functional $F \in C^{0,0}_b(\Lambda^d_T)$ is called locally regular if there exists an increasing sequence $(\tau_k)_{k \in \NN}$ of stopping times with $\tau_0 = 0$, $\tau_k \uparrow \infty$ and $F^k \in C^{1,2}_b(\Lambda^d_T)$ such that 
\begin{align*}
F(t,\omega) = \sum_{k \in \NN} F^k(t,\omega) \ind_{[\tau_k,\tau_{k+1})} (t).
\end{align*}
The set of all locally regular functionals is denoted by $C^{1,2}_{loc}(\Lambda^d_T)$.
\end{definition} 

By definition ~$C^{1,2}_b (\Lambda^d_T) \subset C^{1,2}_{loc} (\Lambda^d_T)$; a difference is that there may be discontinuities or explosions at the stopping times of the locally regular non-anticipative functionals.

A main result in \citet{BC16} is a path-dependent It\^o formula for paths of semimartingales.
 
\begin{theorem}[Functional It\^o formula]
Let $X$ be an $\RR^d$-valued semimartingale. Then for all $F \in C^{1,2}_{loc}(\Lambda^d_T)$ and all $t \in [0,T]$ we have almost surely
\begin{align}
\label{eq:pathdepito}
F(t,X^t) - F(0,X^0) =&~ \int_0^t \cD F(s,X^{s^-}) ds + \frac{1}{2} \sum_{1\le i,j \le d} \int_0^t \nabla^2_{\omega i,j} F(s,X^{s^-}) d[X]^{c ij}_s \nonumber\\ 
&~ + \sum_{1 \le i \le d} \int_0^t \nabla_{\omega i} F(s,X^{s^-}) d X^i_s \\
&~+ \sum_{s \in (0,t]} \left[F(s,X^s) - F(s,X^{s^-}) - \sum_{1 \le i \le d} \nabla_{\omega i} F(s,X^{s^-}) \Delta X^i_s \right].\nonumber
\end{align}
\end{theorem}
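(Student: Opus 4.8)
The plan is to reduce the statement, by a localization argument, to the case of a globally regular functional $F \in C^{1,2}_b(\Lambda^d_T)$ together with a bounded semimartingale $X$, and then to prove the identity by discretizing the path and decomposing each increment of $F$ into a horizontal and a vertical part. Since $F \in C^{1,2}_{loc}(\Lambda^d_T)$, by definition there are stopping times $\tau_k \uparrow \infty$ and functionals $F^k \in C^{1,2}_b(\Lambda^d_T)$ with $F = \sum_k F^k \ind_{[\tau_k,\tau_{k+1})}$. After introducing additional stopping times that reduce $X$ to a semimartingale with bounded paths and bounded jumps, it suffices to establish the formula for each $F^k$ on each stochastic interval $[\tau_k \wedge t, \tau_{k+1} \wedge t)$ and to sum the resulting identities; the boundedness-preserving property of $\cD F^k$, $\nabla_\omega F^k$, $\nabla_\omega^2 F^k$ guarantees that the integrands stay bounded along the localized paths, so the patched identity survives the passage $k \to \infty$. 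This reduction is routine, and I would spell it out only briefly.

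For the main case, fix $F \in C^{1,2}_b(\Lambda^d_T)$ and a bounded semimartingale $X$, and choose partitions $\pi_n = \{0 = t^n_0 < \dots < t^n_{m_n} = t\}$ with mesh tending to zero. Telescoping gives
\[
F(t,X^t) - F(0,X^0) = \sum_\ell \big[F(t^n_{\ell+1}, X^{t^n_{\ell+1}}) - F(t^n_\ell, X^{t^n_\ell})\big],
\]
and I would split each summand into the \emph{horizontal} increment $F(t^n_{\ell+1}, X^{t^n_\ell}) - F(t^n_\ell, X^{t^n_\ell})$, obtained by freezing the path and advancing time, and the \emph{vertical} increment $F(t^n_{\ell+1}, X^{t^n_{\ell+1}}) - F(t^n_{\ell+1}, X^{t^n_\ell})$, obtained by perturbing the frozen path at the new time by the increment $\Delta_\ell X := X_{t^n_{\ell+1}} - X_{t^n_\ell}$. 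Applying the fundamental theorem of calculus in the time variable to the horizontal part and summing, continuity at fixed times of $\cD F$ together with the boundedness-preserving property lets the Riemann sums converge to $\int_0^t \cD F(s, X^{s^-})\, ds$.

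For the vertical part, a second-order Taylor expansion of the map $x \mapsto F(t^n_{\ell+1}, (X^{t^n_\ell})^{x, t^n_{\ell+1}})$ at $x = 0$ produces a first-order term $\sum_i \nabla_{\omega i} F\,(\Delta_\ell X)^i$, a second-order term $\tfrac12 \sum_{i,j} \nabla^2_{\omega i,j} F\,(\Delta_\ell X)^i (\Delta_\ell X)^j$, and a remainder. Since $\nabla_\omega F$ is left-continuous, hence predictable along $X$, and boundedness-preserving, the summed first-order terms converge in probability to $\sum_i \int_0^t \nabla_{\omega i} F(s, X^{s^-})\, dX^i_s$ by the standard approximation of the stochastic integral by left-point Riemann sums. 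For the second-order terms I would use that the discrete quadratic covariation of $X$ converges to $[X]$, splitting off the jumps so that only the continuous part $\tfrac12 \sum_{i,j} \int_0^t \nabla^2_{\omega i,j} F(s, X^{s^-})\, d[X]^{c\,ij}_s$ remains; the jump contributions, combined with the discrepancy between $F(s, X^s)$ and its first-order vertical approximation at the jump times, assemble into the sum $\sum_{s \in (0,t]}\big[F(s,X^s) - F(s,X^{s^-}) - \sum_i \nabla_{\omega i} F(s,X^{s^-}) \Delta X^i_s\big]$.

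The main obstacle is the rigorous passage to the limit in the presence of jumps: one must show simultaneously that the discrete second-order vertical sums converge to the $[X]^c$-integral rather than to the full $[X]$-integral, and that both the Taylor remainder on the continuous part and the accumulated remainder from the small jumps vanish. I would treat this by truncating at a level $\varepsilon$, handling the finitely many jumps of size exceeding $\varepsilon$ exactly and term by term, while the small jumps contribute a second-order remainder controlled by the boundedness of $\nabla^2_\omega F$ and the summability $\sum_{s \in (0,t]} |\Delta X_s|^2 < \infty$. A uniform continuity estimate for $\nabla^2_\omega F$ in the $d_\infty$ metric, available because $\nabla^2_\omega F \in C^{0,0}_l(\Lambda^d_T) \cap B(\Lambda^d_T)$, then renders the remainder negligible, and dominated convergence closes the argument.
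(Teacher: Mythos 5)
First, a point of reference: the paper does not prove this theorem at all — it is quoted from \citet{BC16} — so your outline can only be compared with the proof in that reference, whose general strategy (localization, discretization, horizontal/vertical splitting, Taylor expansion, $\varepsilon$-truncation of jumps) you do follow. Within that strategy, however, there is one genuine gap. You decompose the increment along the true stopped paths as
\begin{align*}
F(t^n_{\ell+1}, X^{t^n_{\ell+1}}) - F(t^n_\ell, X^{t^n_\ell})
=&~ \big[F(t^n_{\ell+1}, X^{t^n_\ell}) - F(t^n_\ell, X^{t^n_\ell})\big]\\
&~ + \big[F(t^n_{\ell+1}, X^{t^n_{\ell+1}}) - F(t^n_{\ell+1}, X^{t^n_\ell})\big],
\end{align*}
and you treat the second bracket as the vertical perturbation of the frozen path by $\Delta_\ell X$ at time $t^n_{\ell+1}$. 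That identification is false: the stopped path $X^{t^n_{\ell+1}}$ carries the actual trajectory of $X$ on $(t^n_\ell, t^n_{\ell+1}]$, whereas the vertically perturbed path $(X^{t^n_\ell})^{\Delta_\ell X,\, t^n_{\ell+1}}$ is constant equal to $X_{t^n_\ell}$ on $[t^n_\ell, t^n_{\ell+1})$ and changes only at $t^n_{\ell+1}$. These two stopped paths differ on a whole interval, so the second bracket is \emph{not} the increment of the finite-dimensional map $x \mapsto F(t^n_{\ell+1}, (X^{t^n_\ell})^{x,\,t^n_{\ell+1}})$ between $x=0$ and $x=\Delta_\ell X$, and your Taylor expansion computes the wrong quantity. (For a path-independent $F(t,\omega)=f(t,\omega_t)$ the two coincide and your argument reduces to the classical proof; the entire difficulty of the functional case sits exactly at this point.)

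The standard repair --- and the route taken in \citet{BC16} and in Cont--Fourni\'e --- is to replace $X$ by a piecewise constant, non-anticipative discretization built on a sequence of random partitions whose points include the times of jumps of size larger than $\varepsilon_n$. Along such a step path every increment between consecutive partition points \emph{is} exactly a horizontal extension followed by a single vertical perturbation, so the telescoping and Taylor arguments apply verbatim and yield the formula for $F$ evaluated along the discretized paths. One then needs a second, separate limiting step: the discretized stopped paths converge in $d_\infty$ to $(s,X^{s^-})$, and the left-continuity together with the boundedness-preserving property of $F$, $\cD F$, $\nabla_\omega F$, $\nabla^2_\omega F$ transfer the identity to $X$ itself (dominated convergence for the Lebesgue and stochastic integrals, and $\sum_{s\le t}|\Delta X_s|^2<\infty$ for the jump series). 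Your outline omits both the discretization device and this second limit; moreover, with deterministic partitions the step approximations of a c\`adl\`ag path with jumps need not converge uniformly, which is why the partition points must be random. The localization step and the $\varepsilon$-truncation of small jumps are otherwise in order.
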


\begin{remark}
In \citet{BC16} a more general version of the functional It\^o formula is derived. Therefore the quadratic variation along a sequence of partitions and the F\"ollmer integral is used. This is established by a non probabilistic pathwise approach, based on ideas from \citet{Fo81}. In the case of semimartingales this reduces to the quadratic variation and the F\"ollmer integral coincides with the stochastic integral. This implies that the comparison results in our paper can be stated for more general processes, e.g. for fractional processes. However our approach relies on (local) martingale properties and can hence not be transferred directly. 
\end{remark}

\section{Path-dependent comparison of semimartingales}
\label{sec:pathdepcomp}

Based on the functional It\^o formula in this section ordering results are derived for path-dependent functions of semimartingales by an extension of the martingale comparison method for the path-independent case. The first main step is to develop a version of the Kolmogorov backwards equation for path-dependent functions. This equation then allows to derive comparison results under equivalent martingale measures and w.r.t. semimartingale measures using the path-dependent It\^o formula in an essential way.

\subsection{Kolmogorov backwards equation}
\label{subsec:kolmogorov}

In this subsection we establish a path-dependent version of the Kolmogorov backwards equation. Let $X$ be a (special) semimartingale on a filtered probability space $(\Omega, \cF, (\cF_t)_{t \in [0,T]}, P)$. We denote by $\hat{X} = (\id,X)$ the corresponding space-time process. Let $(B,C,\nu)$ be the semimartingale characteristics of $\hat{X}$ under $P$ and denote by $(b,c,K)$ the differential characteristics under $P$ with respect to an increasing process $A$, see \citet{JS03}. We denote by $dA$ the measure associated to $A$ and by a superscript the dimension of the semimartingale. In the sequel we write $X^T$ for the whole path of $X$. For a non-anticipative functional $F \in C^{0,1}_{loc}(\Lambda^d_T)$, we define the increment functional
\begin{align*}
H_F : &\Lambda^d_T \times \RR^d \to \RR,\\
& (t,\omega,x) \mapsto F(t,\omega^{t^-} + x \ind_{[t,T]}) - F(t,\omega^{t^-} ) - \sum_{1 \le i \le d} \nabla_{\omega i} F(t,\omega^{t^-}) x^i.
\end{align*}
The following is a path-dependent version of the \emph{Kolmogorov backwards equation} for the case that the underlying semimartingale is a local martingale.

\begin{proposition}
\label{prop:kolmogorovbackwardpathemm}
Let $F \in C^{1,2}_{loc}(\Lambda^d_T)$ and let $X$ be a local martingale. Assume that:
\begin{itemize}
\item[(i)] $(F(t,X^t))_{t \ge 0}$ is a local martingale,

\item[(ii)] $|H_F| \ast \mu^X \in \mathscr{A}_{loc}^+$;
\end{itemize}
Then the following holds $dA \times P$ almost surely
\begin{align}
\begin{split}
\label{eq:kolmogorovbackwardpathemm}
U_tF(t,X^{t^-}) := &~\cD F(t,X^{t^-}) b_t + \frac{1}{2} \sum_{i,j \le d} \nabla^2_{\omega ij} F(t,X^{t^-}) c^{ij}_t\\
& + \int_{\RR^d} H_F(t, X^{t^-},x) K_t(dx) = 0.
\end{split}
\end{align}
\end{proposition}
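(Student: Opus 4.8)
The plan is to apply the functional It\^o formula to the process $F(t,X^t)$, to identify its predictable finite-variation part with $\int_0^t U_sF(s,X^{s^-})\,dA_s$, and then to use the local martingale assumption (i) to force this part to vanish.

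First I would feed the (differential) characteristics of $\hat X=(\id,X)$ into the functional It\^o formula. Since $X^s=X^{s^-}+\Delta X_s\ind_{[s,T]}$, the jump sum in the formula is exactly $\sum_{s\le t}H_F(s,X^{s^-},\Delta X_s)=H_F*\mu^X$, where $\mu^X$ is the jump measure of $X$. Writing $ds=b_s\,dA_s$ for the drift density of the time component of $\hat X$ (the coefficient paired with the horizontal derivative $\cD F$, there being no drift term for $X$ itself as $X$ is a local martingale) and $d[X]^{c,ij}_s=c^{ij}_s\,dA_s$ for the continuous second characteristic, the It\^o formula reads
\begin{align*}
F(t,X^t)-F(0,X^0)=&~\int_0^t\Big(\cD F(s,X^{s^-})b_s+\tfrac12\sum_{i,j}\nabla^2_{\omega ij}F(s,X^{s^-})c^{ij}_s\Big)dA_s\\
&~+\sum_i\int_0^t\nabla_{\omega i}F(s,X^{s^-})\,dX^i_s+H_F*\mu^X.
\end{align*}

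Next comes the decomposition into a predictable finite-variation part and a local martingale, which is where hypothesis (ii) enters. Because $|H_F|*\mu^X\in\mathscr{A}^+_{loc}$, the integral $H_F*\mu^X$ admits the predictable compensator $H_F*\nu=\int_0^t\int_{\RR^d}H_F(s,X^{s^-},x)K_s(dx)\,dA_s$, and $H_F*(\mu^X-\nu)$ is a local martingale. Moreover $\sum_i\int_0^t\nabla_{\omega i}F(s,X^{s^-})\,dX^i_s$ is a local martingale: $X$ is a local martingale and $\nabla_\omega F$, being left-continuous and boundedness preserving, yields a predictable integrand that is locally bounded once one localizes so that $X$ stays in a compact set, as the $C^{1,2}_{loc}$ structure permits. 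Collecting the continuous drift, the compensator $H_F*\nu$, and the two local martingales gives
\begin{align*}
F(t,X^t)-F(0,X^0)=\int_0^t U_sF(s,X^{s^-})\,dA_s+M_t,
\end{align*}
with $M$ a local martingale, $M_0=0$, and $\int_0^{\cdot}U_sF\,dA_s$ a predictable process of locally finite variation, its variation being controlled by the boundedness-preserving bounds and, for the jump part, by (ii).

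Finally I would invoke assumption (i): since $F(\cdot,X^{\cdot})$ is a local martingale, so is $\int_0^{\cdot}U_sF\,dA_s=F(\cdot,X^{\cdot})-F(0,X^0)-M$. A predictable local martingale of finite variation that is null at $0$ is indistinguishable from $0$; hence $\int_0^t U_sF(s,X^{s^-})\,dA_s=0$ for all $t$ almost surely, and since $A$ is increasing this is equivalent to $U_tF(t,X^{t^-})=0$ $dA\times P$-almost surely. I expect the main obstacle to be the second step: certifying that both $H_F*(\mu^X-\nu)$ and the stochastic integral against $dX$ are genuine local martingales and that $\int_0^{\cdot}U_sF\,dA_s$ is really of locally finite variation, so that the finite-variation/local-martingale dichotomy applies. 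This is precisely what hypothesis (ii) and the boundedness-preserving regularity of $C^{1,2}_{loc}$ are designed to provide, together with some care in patching the localizing stopping times $(\tau_k)$ of the locally regular functional.
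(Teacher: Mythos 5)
Your proposal is correct and follows essentially the same route as the paper's proof: apply the functional It\^o formula, compensate the jump measure using hypothesis (ii), absorb the $dX$-integrals and the compensated jump integral into a single local martingale, and conclude that the remaining predictable finite-variation local martingale starting at zero must vanish, forcing the $dA$-density $U_tF$ to be zero $dA\times P$-a.s. The extra care you take in certifying the local martingale property of the stochastic integral and in patching the localizing times is a welcome elaboration of details the paper leaves implicit, but it does not change the argument.
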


\begin{proof}
By It\^{o}'s formula for non-anticipative functionals, $F$ has the following representation
{\small
\begin{align*}
F(t, X^t) = &~ F(0,X^0) + \int_0^t \cD F(s,X^{s^-}) b_s dA_s + \sum_{i \le d} \int_0^t \nabla_{\omega i} F(s, X^{s^-}) dX^i_s\\
&~ + \frac{1}{2} \sum_{i,j \le d} \int_0^t \nabla^2_{\omega ij} F(s,X^{s^-}) c^{ij}_s dA_s\\ 
&~ + \int_{[0,t] \times \RR^d} \left[ F(s, X^{s^-} + x\ind_{[s,T]}) - F(s, X^{s^-}) - \sum_{i \le d} \nabla_{\omega i} F(s, X^{s^-}) x^i \right] \mu^X (ds,dx).
\end{align*}}
We compensate the jump integral and combine the local martingales from the $dX$ integrals and the compensated jump integral to a local martingale $(M_t)_{t \in [0,T]}$. Then we have
\begin{align*}
F(t, X^t) = &~ F(0,X^0) + M_t + \int_0^t \cD F(s,X^{s^-}) b_S dA_s\\ 
&~ + \frac{1}{2} \sum_{i,j \le d} \int_0^t \nabla^2_{\omega ij} F(s,X^{s^-}) c^{ij}_s dA_s + \int_{[0,t] \times \RR^d} H_F(s,X^{s^-},x) K_s(dx)dA_s.
\end{align*}
It follows that the process
\begin{align*}
\int_0^t & \left[ \cD F(s,X^{s^-}) b_s + \frac{1}{2} \sum_{i,j \le d} \nabla^2_{\omega ij} F(s,X^{s^-}) c^{ij}_s + \int_{\RR^d} H_F(s,X^{s^-},x) K_s(dx) \right] dA_s\\
&= \int_0^t U_s F(s,X^{s-}) dA_s.
\end{align*}
is a predictable local martingale of finite variation starting in zero. As consequence this process is almost surely zero. Thus, the integrand has to be $dA \times P$ almost surely zero as well.
\end{proof}

We proceed with the case when $X$ is a special semimartingale, which implies that the process $\hat{X}$ is a special semimartingale as well. Recall that we can use the identity as truncation function and hence the canonical decomposition of $\hat{X}$ has the form:
\begin{align*}
\hat{X}_t = \hat{X}_0 + \left(0,X_t^c + x \ast (\mu^X - \nu)_t \right) + (\hat{b}^X \cdot \hat{A} )_t.
\end{align*}
The following result then states a path-dependent version of the Kolmogorov backwards equation for special semimartingales.

\begin{proposition}
\label{prop:kolmogorovbackwardpathp}
Let $F \in C^{1,2}_{loc}(\Lambda^d_T)$ and let $X$ be a special semimartingale. Assume that:
\begin{itemize}
\item[(i)] $(F(t,X^t))_{t \ge 0}$ is a local martingale;

\item[(ii)] $|H_F| \ast \mu^X \in \mathscr{A}_{loc}^+$;
\end{itemize}

Then the following holds $dA \times P$ almost surely
\begin{align}
\begin{split}
\label{eq:kolmogorovbackwardpathp}
\bar{U}_tF(t,X^{t^-}) :=&~ \cD F(t,X^{t^-}) b_t + \sum_{i \le d} \nabla_{\omega i} F(t, X^{t^-}) b^i_t\\
& + \frac{1}{2} \sum_{i,j \le d} \nabla^2_{\omega ij} F(t,X^{t^-}) c^{ij}_t + \int_{\RR^d} H_F(t,X^{t^-},x) K_t(dx) = 0.
\end{split}
\end{align}
\end{proposition}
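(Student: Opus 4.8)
The plan is to follow the strategy of the proof of Proposition~\ref{prop:kolmogorovbackwardpathemm} almost verbatim, the only structural change being that the driving process now carries a genuine drift in its $X$-components. First I would apply the functional It\^o formula \eqref{eq:pathdepito} to $F(t,X^t)$. The crucial difference to the local martingale case is that the stochastic integral $\int_0^t \nabla_{\omega i} F(s,X^{s^-})\,dX^i_s$ is no longer itself a local martingale: invoking the canonical decomposition
\[
\hat X_t = \hat X_0 + \bigl(0,\, X_t^c + x\ast(\mu^X-\nu)_t\bigr) + (\hat b^X\cdot\hat A)_t
\]
recalled above, each differential $dX^i_s$ splits into the continuous martingale part $dX^{c,i}_s$, the compensated jump part $d\bigl(x^i\ast(\mu^X-\nu)\bigr)_s$, and the predictable drift $b^i_s\,dA_s$. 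Isolating this last contribution is exactly what produces the additional summand $\sum_{i\le d}\nabla_{\omega i}F(t,X^{t^-})\,b^i_t$ which distinguishes $\bar U_tF$ from $U_tF$; the horizontal term $\cD F\cdot b_t$ and the second-order term $\tfrac12\sum_{ij}\nabla^2_{\omega ij}F\cdot c^{ij}_t$ are unaffected.

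Next I would compensate the jump integral appearing in \eqref{eq:pathdepito}. This is legitimate by hypothesis~(ii): since $|H_F|\ast\mu^X\in\mathscr A^+_{loc}$, the random measure integral $H_F\ast\mu^X$ possesses the predictable compensator $\int_0^t\!\int_{\RR^d}H_F(s,X^{s^-},x)\,K_s(dx)\,dA_s$, and the difference $H_F\ast(\mu^X-\nu)$ is a local martingale. Collecting the two martingale integrals coming from $dX^i_s$ together with this compensated jump term into a single local martingale $M$, the functional It\^o formula rearranges to
\[
F(t,X^t)=F(0,X^0)+M_t+\int_0^t\bar U_sF(s,X^{s^-})\,dA_s,
\]
in which the $dA$-integrand is precisely the left-hand side of \eqref{eq:kolmogorovbackwardpathp}.

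Finally I would invoke hypothesis~(i). Since $(F(t,X^t))_{t\ge0}$ is a local martingale, subtracting $F(0,X^0)+M_t$ shows that the process $t\mapsto\int_0^t\bar U_sF(s,X^{s^-})\,dA_s$ is at once a local martingale and a predictable process of finite variation starting at $0$. By uniqueness of the canonical decomposition of a special semimartingale, such a process is indistinguishable from zero, so its $dA$-density $\bar U_tF(t,X^{t^-})$ vanishes $dA\times P$-almost surely, which is \eqref{eq:kolmogorovbackwardpathp}.

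The main obstacle is the bookkeeping in the first paragraph: one has to track carefully which pieces of $dX^i_s$ are local martingales and which form the drift, and to check that hypothesis~(ii) is exactly the integrability needed so that the jump term can be compensated without spoiling the local martingale property. Once the finite variation part has been correctly assembled, the remainder is the same ``a predictable finite-variation local martingale is null'' argument already used for Proposition~\ref{prop:kolmogorovbackwardpathemm}.
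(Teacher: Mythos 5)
Your proposal is correct and follows essentially the same route as the paper: apply the functional It\^o formula, use the canonical decomposition of the special semimartingale to separate the drift term $\sum_i \nabla_{\omega i}F\, b^i_s\,dA_s$ from the martingale parts, compensate the jump integral via hypothesis (ii), and conclude with the standard fact that a predictable finite-variation local martingale starting at zero vanishes. No substantive differences from the paper's argument.
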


\begin{proof}
It\^{o}'s formula for non-anticipative functionals yields
{\small
\begin{align*}
F(t, X^t) = &~ F(0,X^0) + \int_0^t \cD F(s,X^{s^-}) b_s dA_s + \sum_{i \le d} \int_0^t \nabla_{\omega i} F(s, X^{s^-}) dX_s^i\\
&~ + \frac{1}{2} \sum_{i,j \le d} \int_0^t \nabla^2_{\omega ij} F(s,X^{s^-}) c^{ij}_s dA_s\\ 
&~ + \int_{[0,t] \times \RR^d} \left[ F(s, X^{s^-} + x \ind_{[s,T]}) - F(s, X^{s^-}) - \sum_{i \le d} \nabla_{\omega i} F(s, X^{s^-}) x^i \right] \mu^X (ds,dx).
\end{align*}}
We unite the local martingales into one local martingale $M$ as in the proof of Proposition \ref{prop:kolmogorovbackwardpathemm}. Here these are, by the canonical decomposition, the integrals with respect to $X^c$ and with respect to the compensated jump integrals. As a result we obtain
\begin{align*}
F(t, X^t) = &~ F(0,X^0) + M_t + \int_0^t \cD F(s,X^{s^-}) b_s dA_s + \sum_{i \le d} \int_0^t \nabla_{\omega i} F(s, X^{s^-}) b^i_s dA_s\\
&~ + \frac{1}{2} \sum_{i,j \le d} \int_0^t \nabla^2_{\omega ij} F(s,X^{s^-}) c^{ij}_s dA_s + \int_{[0,T] \times \RR^d} H_F(s,X^{s^-},x) K_s(dx) dA_s.
\end{align*}
So the process
\begin{align*}
&~\int_0^t \left[ \cD F(s,X^{s^-}) b_s + \sum_{i \le d} \nabla_{\omega i} F(s, X^{s^-}) b^i_s + \frac{1}{2} \sum_{i,j \le d} \nabla^2_{\omega ij} F(s,X^{s^-}) c^{ij}_s \right.\\
&~ \left. + \int_{\RR^d} H_F(s,X^{s^-},x) K_s(dx) \vphantom{\sum_{i,j \le d} \nabla^2_{\omega ij}} \right] dA_s = \int_0^t \bar{U}_s F(s,X^{s-}) dA_s.
\end{align*}
is a predictable local martingale of finite variation starting in zero implying that it is almost surely zero. Thus, the integrand has to be $dA \times P$ almost surely zero as well.
\end{proof}

\subsection{Comparison results under equivalent martingale measures}

Based on the Kolmogorov backwards equations in Section \ref{subsec:kolmogorov} we derive path-dependent comparison results under e.m.m.. Therefore, let $X$ and $Y$ be semimartingales which possess an e.m.m. each. We denote the e.m.m. and semimartingale characteristics which occur by superscript to make clear to which semimartingale they correspond.

We introduce the path-dependent \emph{propagation operator (valuation functional)}. Therefore, let $f: D([0,T],\RR^d) \to \RR$ be a measurable function then we define the valuation functional $G_f$ by
\begin{align}
\label{eq:backwardfunctionalpath}
G_f(t, \omega) := E_{Q^X}\left [\left .f(X^T)\right |X^t = \omega^t\right ].
\end{align}
This is a non-anticipative functional. Considering
\begin{align*}
G_f(t,X^t) = E_{Q^X}[f(X^T)|\sigma(X_s;s \le t)],
\end{align*}
we see that this functional takes into account the complete past of the semimartingale $X$ and that it is by construction a martingale with respect to the natural filtration generated by $X$. In that case $G_f(t,X^t)$ is a martingale and fulfills equation \eqref{eq:kolmogorovbackwardpathemm}.

Since we need to control the second vertical derivatives, we need the following path-dependent notion of convexity from \citet{Ri15}. 

\begin{definition}
\label{def:definitionverticallyconvex}
A non-anticipative functional $F: \Lambda_T^d \to \RR$ is called \emph{vertically convex} on $U \subset \Lambda_T^d$ if for all $(t,\omega) \in U$ there exists a neighbourhood $V \subset \RR^d$ of $0$ such that the map 
\begin{align}
\begin{split}
\label{eq:definitionverticallyconvex}
V &\to \RR\\
e &\to F\left (t,\omega^t + e \ind_{[t,T]}\right )
\end{split}
\end{align}
is convex.
\end{definition}

For a non-anticipative functional $F \in C^{0,2}(\Lambda_T)$ which is vertically convex it holds that the matrix of the second vertical derivative is positive semidefinite. This follows directly from the definition of the vertical directional derivative in Definition \ref{def:verticalderivative}, and the convexity of the function in \eqref{eq:definitionverticallyconvex}.

In the sequel also vertical directional convexity is a relevant property for the comparison results. We define it analogously to vertical convexity. 

\begin{definition}
A non-anticipative functional $F: \Lambda_T^d \to \RR$ is called \emph{vertically directional convex} on $U \subset \Lambda_T^d$ if for all $(t,\omega) \in U$ there exists a neighbourhood $V \subset \RR^d$ of $0$ such that the map 
\begin{align*}
V &\to \RR\\
e &\to F\left (t,\omega^t + e \ind_{[t,T]}\right )
\end{align*}
is directionally convex.
\end{definition}

For the notion of vertical directional convexity it holds that:\\
$F \in C^{0,2}(\Lambda_T)$ is vertically directional convex on $U$ if and only if $\nabla^2_{\omega ij} F(t,\omega) \ge 0$ for all $i,j \le d$ and all $(t,\omega) \in U$.

\begin{theorem}[Vertical directional convex comparison under e.m.m.] 
\label{thm:comparisonpathemmdcx}
Let $X, Y$ be semimartingales such that $X_0 = Y_0 = x_0$ almost surely and let $f(X^T) \in L^1(Q^X)$, $f(Y^T) \in L^1(Q^Y)$. Assume that
\begin{itemize}
\item[(i)] $G_f \in C_{loc}^{1,2}(\Lambda_T^d)$ and $G_f$ is vertically directional convex on $\Lambda_T^d$;

\item[(ii)] $U_t G_f(t,Y^{t^-}) = 0$ holds $dA \times Q^Y$ almost surely for all $t \in [0,T]$ where the operator $U$ is defined in \eqref{eq:kolmogorovbackwardpathemm} with the differential semimartingale characteristics of $\hat{X}$ under $Q^X$;

\item[(iii)] $|H_{G_f}| \ast \mu^Y \in \mathscr{A}_{loc}^+$;

\item[(iv)] $(G_f(t, Y^t)^-)_{t \in [0,T]}$ is of class (DL);

\item[(v)] $A^{\hat{Y}} = A^{\hat{X}}$;

\item[(vi)] The differential characteristics are $dA^{\hat{Y}} \times Q^Y$ almost surely ordered for all $i,j \le d$; i.e.
\begin{align*}
c_t^{\hat{Y} ij} \le&~ c_t^{\hat{X} ij},\\
\int_{\RR^d} H_{G_f}(t,Y^{t^-},x) K^{\hat{Y}}_t(dx) \le&~ \int_{\RR^d} H_{G_f}(t,Y^{t^-},x) K^{\hat{X}}_t(dx).
\end{align*}
\end{itemize} 

Then it holds that
\begin{align*}
E_{Q^Y}\left [f(Y^T)\right ] \le E_{Q^X}\left [f(X^T)\right ].
\end{align*}
If the inequalities in $(vi)$ are reversed and $(G_f(t, Y^t)^+)_{t \in [0,T]}$ is of class (DL), we have
\begin{align*}
E_{Q^Y}\left [f(Y^T)\right ] \ge E_{Q^X}\left [f(X^T)\right ].
\end{align*} 
\end{theorem}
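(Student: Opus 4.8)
The plan is to apply the functional It\^o formula to the $Q^X$-valuation functional $G_f$ along the path of $Y$ under $Q^Y$, identify the resulting drift, and show that it is nonincreasing; evaluating $G_f$ at the two endpoints $t=0$ and $t=T$ then yields the comparison of expectations. The structural heart of the argument is that $G_f$ is a fixed deterministic functional of the path, built from the $Q^X$-dynamics, so it carries those dynamics with it even when we run it along $Y$.

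First I would assemble the two drift identities. Since $G_f(t,X^t)$ is a $Q^X$-martingale by construction, Proposition \ref{prop:kolmogorovbackwardpathemm} gives $U_tG_f(t,X^{t^-})=0$ along $X$-paths; hypothesis (ii) is precisely the transport of this identity to the $Y$-paths, namely that
\[
\cD G_f(t,Y^{t^-})\,b^{\hat X}_t + \tfrac12\sum_{i,j\le d}\nabla^2_{\omega ij}G_f(t,Y^{t^-})\,c^{\hat X ij}_t + \int_{\RR^d} H_{G_f}(t,Y^{t^-},x)\,K^{\hat X}_t(dx)=0
\]
holds $dA\times Q^Y$ almost surely, the operator carrying the $\hat X$-characteristics under $Q^X$. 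In parallel, using $G_f\in C^{1,2}_{loc}$ from (i) and $|H_{G_f}|\ast\mu^Y\in\mathscr{A}^+_{loc}$ from (iii), I would run the functional It\^o formula for $G_f(t,Y^t)$ under $Q^Y$, compensate the jump integral, and collect all stochastic integrals into one local martingale $M$, exactly as in the proof of Proposition \ref{prop:kolmogorovbackwardpathemm}. As $Y$ is a $Q^Y$-local martingale its space-drift vanishes, so the finite-variation part of $G_f(t,Y^t)$ is $\int_0^t \bar U^{\hat Y}_s G_f(s,Y^{s^-})\,dA_s$, where $\bar U^{\hat Y}$ collapses to the $U$-form built from the $\hat Y$-characteristics.

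The decisive computation is to subtract the hypothesis-(ii) identity from this drift integrand. The horizontal terms cancel because (v) forces $A^{\hat Y}=A^{\hat X}$ and hence equal time-drifts, leaving the drift of $G_f(t,Y^t)$ equal to
\[
\tfrac12\sum_{i,j\le d}\nabla^2_{\omega ij}G_f(t,Y^{t^-})\big(c^{\hat Y ij}_t-c^{\hat X ij}_t\big) + \int_{\RR^d} H_{G_f}(t,Y^{t^-},x)\big(K^{\hat Y}_t-K^{\hat X}_t\big)(dx).
\]
Vertical directional convexity of $G_f$ in (i) gives $\nabla^2_{\omega ij}G_f\ge 0$ for all $i,j$, so the ordering $c^{\hat Y ij}_t\le c^{\hat X ij}_t$ in (vi) makes the diffusion sum nonpositive term by term, while the jump ordering in (vi) makes the integral difference nonpositive. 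Thus the drift is $\le 0$ $dA\times Q^Y$ almost surely, and $G_f(t,Y^t)$ is a $Q^Y$-local supermartingale.

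The last step, which I expect to be the only genuine obstacle, is to pass from the local supermartingale property to the inequality of expectations; this is exactly what the remaining regularity hypothesis is for. Hypothesis (iv), that $(G_f(t,Y^t)^-)_{t\in[0,T]}$ is of class (DL), upgrades the local supermartingale to a true supermartingale, giving $E_{Q^Y}[G_f(T,Y^T)]\le E_{Q^Y}[G_f(0,Y^0)]$. Identifying the endpoints then finishes the proof: $G_f(T,Y^T)=f(Y^T)$ since conditioning on the entire path is trivial, and $G_f(0,Y^0)=G_f(0,x_0)=E_{Q^X}[f(X^T)]$ because $X_0=Y_0=x_0$ almost surely, whence $E_{Q^Y}[f(Y^T)]\le E_{Q^X}[f(X^T)]$. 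For the reverse inequality, the same calculation with the orderings in (vi) reversed renders the drift nonnegative, so that $G_f(t,Y^t)$ is a local submartingale, turned into a true submartingale by the class (DL) property of its positive part.
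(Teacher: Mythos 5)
Your proposal is correct and follows essentially the same route as the paper: apply the functional It\^o formula to $G_f(t,Y^t)$ under $Q^Y$, subtract the Kolmogorov backwards identity from hypothesis (ii) using (v) to cancel the horizontal term, conclude nonpositivity of the drift from vertical directional convexity and (vi), and upgrade the resulting local supermartingale to a true one via the class (DL) condition (iv). No substantive differences to report.
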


\begin{proof}
For the proof we establish that the process $(G_f(t, Y^t))_{t \in [0,T]}$ is a $Q^Y$-supermartingale. Then it follows that
\begin{align*}
E_{Q^Y}\left [f(Y^T)\right ] = E_{Q^Y}\left [G_f(T,Y^T)\right ] \le G_f(0,x_0) = E_{Q^X}\left [f(X^T)\right ].
\end{align*}
Since $G_f \in C_{loc}^{1,2}(\Lambda_T^d)$,  we can apply It\^{o}'s formula for non-anticipative functionals and obtain that $(G_f(t, Y^t))_{t \in [0,T]}$ is a semimartingale with decomposition
{\small
\begin{align*}
\MoveEqLeft G_f(t, Y^t)\\
=&~ G_f(0,x_0) + \int_0^t \cD G_f(s,Y^{s^-}) b^{\hat{Y}}_s dA^{\hat{Y}}_s + \sum_{i \le d} \int_0^t \nabla_{\omega i} G_f(s, Y^{s^-}) dY_s^i\\
&~ + \frac{1}{2} \sum_{i,j \le d} \int_0^t \nabla^2_{\omega ij} G_f(s,Y^{s^-}) c^{\hat{Y}ij}_s dA^{\hat{Y}}_s\\ 
&~ + \int_{[0,t] \times \RR^d} \left[ G_f(s, Y^{s^-} + x\ind_{[t,T]}) - G_f(s, Y^{s^-}) - \sum_{i \le d} \nabla_{\omega i} G_f(s, Y^{s^-}) x^i \right] \mu^Y (ds,dx).
\end{align*}}
We compensate the jump integral and combine the local martingales into $M$. Keeping in mind that $Y$ is a $Q^Y$ local martingele, this leads to
{ \small
\begin{align*}
G_f(t, Y^t) = &~ G_f(0,x_0) + M_t + \int_0^t \cD G_f(s,Y^{s^-}) b^{\hat{Y}}_s dA^{\hat{Y}}_s + \frac{1}{2} \sum_{i,j \le d} \int_0^t \nabla^2_{\omega ij} G_f(s,Y^{s^-}) c^{\hat{Y}ij}_s dA^{\hat{Y}}_s\\ 
&~ + \int_{[0,t] \times \RR^d} H_{G_f}(s,Y^{u^-},x) K^{\hat{Y}}_s(dx)dA^{\hat{Y}}_u.
\end{align*}}
To gain the local supermartingale property we show that the following process $(Z_t)$ is decreasing:
{\small
\begin{align*}
Z_t := \int_0^t \left[ \cD G_f(s,Y^{s^-}) b^{\hat{Y}}_s + \frac{1}{2} \sum_{i,j \le d} \nabla^2_{\omega ij} G_f(s,Y^{s^-}) c^{\hat{Y}ij}_s + \int_{\RR^d} H_{G_f}(s,Y^{s^-},x) K^{\hat{Y}}_s(dx) \right]dA^{\hat{Y}}_s.
\end{align*}}
By Assumption~$(v)$ we have that $b^{\hat{Y}}_t dA^{\hat{Y}}_t = b^{\hat{X}}_t dA^{\hat{Y}}_t = dt$. With Assumption~$(ii)$ we obtain
{\small
\begin{align*}
Z_t = \int_0^t \left[\frac{1}{2} \sum_{i,j \le d} \nabla^2_{\omega ij} G_f(s,Y^{s^-}) \left (c^{\hat{Y}ij}_s - c^{\hat{X}ij}_s\right ) + \int_{\RR^d} H_{G_f}(s,Y^{s^-},x) \left (K^{\hat{Y}}_s(dx) - K^{\hat{X}}_s(dx)\right ) \right]dA^{\hat{Y}}_s.
\end{align*}}
Due to the vertical directional convexity and $(vi)$ the first integrand is non-positive. That the second integrand is non-positive follows by Assumption~$(vi)$.\\
Therefore, $-Z \in \mathscr{A}_{loc} ^+$ and $(G_f(t, Y^t))_{t \in [0,T]}$ is a local $Q^Y$-supermartingale.\\
Finally, by Assumption~$(iv)$ follows that $(G_f(t, Y^t))_{t \in [0,T]}$ is a proper $Q^Y$ supermartingale.\\
With reversed inequalities and assuming that $(G_f(t, Y^t)^+)_{t \in [0,T]}$ is of class (DL), we get the submartingale property for $(G_f(t, Y^t))_{t \in [0,T]}$.
\end{proof}

\begin{remark}
\begin{enumerate}
\item Instead of demanding that the kernels are ordered for $H_{G_f}$, we could also have demanded that they are ordered for a bigger function class, for example for all functions which are directionally convex. Note that by vertical directional convexity of $G_f$, $H_{G_f}$ is directionally convex in $x$.

\item Compared to previous papers on this topic we do not need the propagation of order property. The propagation of order means that the propagation operator maps particular function classes, like (directional) convex functions or increasing functions, into themselves. Since we consider a single function we only assume that the propagtion operator maps this function into the class of vertically directional convex functions.
\end{enumerate}
\end{remark}

Next we consider the case that $G_f$ is a vertically convex function.

\begin{theorem}[Vertical convex comparison under e.m.m.]
\label{thm:comparisonpathemmcx}
Let $X,Y$ be semimartingales with $X_0 = Y_0 = x_0$ almost surely and let $f(X^T) \in L^1 (Q^X)$, $ f(Y^T) \in L^1(Q^Y)$. Assume that
\begin{itemize}
\item[(i)] $G_f \in C_{loc}^{1,2}(\Lambda_T^d)$ and $G_f$ is vertically convex;

\item[(ii)]-- $(v)$ of Theorem \ref{thm:comparisonpathemmdcx} hold;

\item[(vi)] The differential characteristics are $dA^{\hat{Y}} \times Q^Y$ almost surely ordered:
\begin{align*}
c_t^{\hat{Y}} \le_{psd}&~ c_t^{\hat{X}},\\
\int_{\RR^d} H_{G_f}(t,Y^{t^-},x) K^{\hat{Y}}_t(dx) \le&~ \int_{\RR^d} H_{G_f}(t,Y^{t^-},x) K^{\hat{X}}_t(dx).
\end{align*}
\end{itemize} 

Then it holds that
\begin{align*}
E_{Q^Y}\left [f(Y^T)\right ] \le E_{Q^X}\left [f(X^T)\right ].
\end{align*}
If the inequalities in $(vi)$ are reversed and $(G_f(t, Y^t)^+)_{t \in [0,T]}$ is of class (DL), we have
\begin{align*}
E_{Q^Y}\left [f(Y^T)\right ] \ge E_{Q^X}\left [f(X^T)\right ].
\end{align*}
\end{theorem}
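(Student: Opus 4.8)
The plan is to run the proof of Theorem \ref{thm:comparisonpathemmdcx} essentially verbatim: the only hypotheses that change are $(i)$ (vertical convexity instead of vertical directional convexity) and the ordering of the second characteristics in $(vi)$ (positive semidefinite order $c^{\hat Y}_t\le_{psd}c^{\hat X}_t$ instead of the entrywise order), while the drift, martingale and kernel bookkeeping is untouched. Accordingly I would again show that $(G_f(t,Y^t))_{t\in[0,T]}$ is a $Q^Y$-supermartingale, which immediately yields
\begin{align*}
E_{Q^Y}\!\left[f(Y^T)\right]=E_{Q^Y}\!\left[G_f(T,Y^T)\right]\le G_f(0,x_0)=E_{Q^X}\!\left[f(X^T)\right].
\end{align*}

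First I would apply the functional It\^o formula to $(G_f(t,Y^t))_{t\in[0,T]}$, which is licit by $(i)$ since $G_f\in C^{1,2}_{loc}(\Lambda_T^d)$. Using that $Y$ is a $Q^Y$-local martingale, the $dY$-integral $\sum_i\int\nabla_{\omega i}G_f\,dY^i$ is already a local martingale; compensating the jump integral and collecting it together with this integral into a local martingale $M$ gives $G_f(t,Y^t)=G_f(0,x_0)+M_t+Z_t$ with $Z_t$ the finite-variation process of the earlier proof. By Assumption $(v)$ the horizontal-derivative term contributes $\cD G_f\,dt$ regardless of whether the time-drift of $\hat X$ or of $\hat Y$ is used, so subtracting the Kolmogorov backwards equation for $X$ furnished by Assumption $(ii)$ cancels it and leaves exactly
\begin{align*}
Z_t=\int_0^t\Big[\tfrac12\sum_{i,j\le d}\nabla^2_{\omega ij}G_f(s,Y^{s^-})\big(c^{\hat Y ij}_s-c^{\hat X ij}_s\big)+\int_{\RR^d}H_{G_f}(s,Y^{s^-},x)\big(K^{\hat Y}_s(dx)-K^{\hat X}_s(dx)\big)\Big]\,dA^{\hat Y}_s,
\end{align*}
just as in the directional-convex case.

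The decisive and only genuinely new step is the non-positivity of the second-order integrand. Since $G_f$ is vertically convex, the remark following Definition \ref{def:definitionverticallyconvex} gives that the matrix $\nabla^2_\omega G_f(s,Y^{s^-})$ is positive semidefinite, and $(vi)$ gives $c^{\hat X}_s-c^{\hat Y}_s\ge_{psd}0$. I would then invoke the elementary linear-algebra fact that the Frobenius inner product of two symmetric positive semidefinite matrices is non-negative, i.e.\ $\sum_{i,j}B_{ij}D_{ij}=\mathrm{tr}(BD)\ge 0$ whenever $B,D\succeq 0$ (write $D=\sum_k\lambda_k v_kv_k^{\top}$ with $\lambda_k\ge0$ and note $\mathrm{tr}(BD)=\sum_k\lambda_k\,v_k^{\top}Bv_k\ge0$). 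Applying this with $B=\nabla^2_\omega G_f(s,Y^{s^-})$ and $D=c^{\hat X}_s-c^{\hat Y}_s$ shows $\tfrac12\sum_{i,j}\nabla^2_{\omega ij}G_f\,(c^{\hat Y ij}_s-c^{\hat X ij}_s)\le0$. This replaces the componentwise argument used under directional convexity, and it is precisely the point where positive semidefinite ordering, rather than entrywise ordering, is what is required. The kernel integrand is non-positive by the second inequality of $(vi)$, exactly as before.

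Hence both integrands are non-positive, so $-Z\in\mathscr{A}^+_{loc}$ and $(G_f(t,Y^t))_{t\in[0,T]}$ is a local $Q^Y$-supermartingale; Assumption $(iv)$ (class (DL) of the negative part) upgrades this to a true supermartingale, giving the stated inequality. For the reversed characteristics in $(vi)$, the same computation makes both integrands non-negative, so $(G_f(t,Y^t))$ becomes a local submartingale, and the class-(DL) condition on the positive part yields a genuine submartingale and the opposite comparison. I expect no serious obstacle beyond the matrix-trace observation above: all analytic subtleties (compensation of $\mu^Y$ via $(iii)$, the local-martingale decomposition, and the (DL) upgrade) are identical to the proof of Theorem \ref{thm:comparisonpathemmdcx}.
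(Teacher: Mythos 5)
Your proposal is correct and follows essentially the same route as the paper: reduce to the supermartingale property of $(G_f(t,Y^t))$, cancel the drift and horizontal terms via Assumptions $(ii)$ and $(v)$, and handle the second-order term by the eigendecomposition of $c^{\hat X}_s-c^{\hat Y}_s$ combined with the positive semidefiniteness of $\nabla^2_\omega G_f$ (your trace formulation $\mathrm{tr}(BD)=\sum_k\lambda_k v_k^{\top}Bv_k\ge 0$ is exactly the computation the paper performs). No gaps.
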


\begin{proof}
We show that $(G_f(t,Y^t))_{t \in[0,T]}$ is a $Q^Y$-supermartingale. Analogously to the proof of Theorem \ref{thm:comparisonpathemmdcx} we need to show, that $dA^{\hat{Y}} \times Q^Y$ a.s.
\begin{align}
\label{eq:nonpos}
\begin{split}
\frac{1}{2} \sum_{i,j \le d} \nabla^2_{\omega ij} G_f(s,Y^{s^-})& \left (c^{\hat{Y}ij}_s - c^{\hat{X}ij}_s\right )\\
&+ \int_{\RR^d} H_{G_f}(s,Y^{s^-},x) \left (K^{\hat{Y}}_s(dx) - K^{\hat{X}}_s(dx)\right ) \le 0.
\end{split}
\end{align}
Then the assertion follows since the other terms in the functional It\^o formula are local martingales. As in the proof of Theorem 3.4 in \citet{KR19} we get by positive definiteness, that the eigendecomposition of the matrix $-(c^{\hat{Y}}_s - c^{\hat{X}}_s) = c_s^{\hat{X}} - c_s^{\hat{Y}}$ has the form $(\sum_{k \le d} \lambda_k e_k^i e_k^j)_{i,j \le d}$ with eigenvalues $\lambda_k \ge 0$ and eigenvectors $e_k$. We obtain equality of the first process above with
\begin{align*}
- \frac{1}{2} \sum_{k \le d} \lambda_k \sum_{i,j \le d} \nabla^2_{\omega ij} G_f(s,Y^{s^-}) e^i_k e^j_k = - \frac{1}{2} \sum_{k \le d} \lambda_k e'_k \nabla_\omega^2 G_f(s,Y^{s^-}) e_k,
\end{align*}
which is non-positive $dA^{\hat{Y}} \times Q^Y$ almost surely due to the positive semidefiniteness of the matrix $\nabla_\omega^2 G_f$. \\
The second integrand is non-positive $dA^{\hat{Y}} \times Q^Y$ almost surely by Assumption~$(vi)$. With Assumption~$(iv)$ it follows that $(G_f(t,Y^t))_{t \in[0,T]}$ is a proper supermartingale.\\
If the inequalities in $(vi)$ are reversed and $(G_f(t, Y^t)^+)_{t \in [0,T]}$ is of class (DL), we have that $(G_f(t,Y^t))_{t \in[0,T]}$ is a submartingale.
\end{proof}

With the help of the key inequality of the proofs above, we can state a corollary which does not need the assumption of vertical convexity or vertical directional convexity but only uses the inequality in \eqref{eq:nonpos} for a comparison result.

\begin{corollary}[General comparison condition under e.m.m.]
Let $X,Y$ be semimartingales and let $X_0 = Y_0 = x_0$ almost surely. Further let $f$ be such that $f(X^T) \in L^1(Q^X)$ and $f(Y^T) \in L^1(Q^Y)$. Assume that $G_f \in C_{loc}^{1,2}(\Lambda_T^d)$ and that Assumptions~$(ii)$--$(v)$ of Theorem \ref{thm:comparisonpathemmdcx} hold. Further, let $dA^{\hat{Y}} \times Q^Y$ almost surely inequality \eqref{eq:nonpos} hold. Then we obtain
\begin{align*}
E_{Q^Y}\left [f(Y^T)\right ] \le E_{Q^X}\left [f(X^T)\right ]
\end{align*}
If the inequality is reversed and $(G_f(t, Y^t)^+)_{t \in [0,T]}$ is of class (DL), then we obtain
\begin{align*}
E_{Q^Y}\left [f(Y^T)\right ] \ge E_{Q^X}\left [f(X^T)\right ].
\end{align*}
\end{corollary}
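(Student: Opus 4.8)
The plan is to reproduce verbatim the supermartingale argument used in the proofs of Theorems~\ref{thm:comparisonpathemmdcx} and~\ref{thm:comparisonpathemmcx}, but to invoke the assumed inequality \eqref{eq:nonpos} directly at the point where those proofs instead \emph{derived} \eqref{eq:nonpos} from (vertical directional, resp.\ vertical) convexity. Concretely, I would show that $(G_f(t,Y^t))_{t\in[0,T]}$ is a $Q^Y$-supermartingale. Granting this, the conclusion is immediate from the terminal identity $G_f(T,Y^T)=f(Y^T)$ (since $G_f(T,\omega)=E_{Q^X}[f(X^T)\mid X^T=\omega^T]=f(\omega^T)$) together with $G_f(0,x_0)=E_{Q^X}[f(X^T)]$, because supermartingality yields $E_{Q^Y}[f(Y^T)]=E_{Q^Y}[G_f(T,Y^T)]\le G_f(0,x_0)=E_{Q^X}[f(X^T)]$.

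First I would apply the functional It\^o formula to $(G_f(t,Y^t))_t$, which is legitimate since $G_f\in C^{1,2}_{loc}(\Lambda_T^d)$ by hypothesis, compensate the jump integral using Assumption~$(iii)$, and collect the $dY$-integrals together with the compensated jump term into a single local martingale $M$ (here $Y$ is a $Q^Y$ local martingale, so the $\nabla_{\omega i}G_f\,dY^i$ terms contribute no drift). As in the parent proofs this leaves a predictable finite-variation drift $Z_t=\int_0^t U_s G_f(s,Y^{s^-})\,dA^{\hat Y}_s$ whose integrand is $\cD G_f(s,Y^{s^-})\,b^{\hat Y}_s+\tfrac12\sum_{i,j\le d}\nabla^2_{\omega ij}G_f(s,Y^{s^-})\,c^{\hat Y ij}_s+\int_{\RR^d}H_{G_f}(s,Y^{s^-},x)\,K^{\hat Y}_s(dx)$.

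Next I would reduce this drift to the form of \eqref{eq:nonpos}. Using Assumption~$(v)$, $A^{\hat Y}=A^{\hat X}$, one has $b^{\hat Y}_s\,dA^{\hat Y}_s=b^{\hat X}_s\,dA^{\hat X}_s$, and subtracting the Kolmogorov backwards equation for the $X$-characteristics, which vanishes by Assumption~$(ii)$ (that is, $U_tG_f(t,Y^{t^-})=0$ computed with the characteristics of $\hat X$), cancels the $\cD G_f\,b$ term and yields
\[
Z_t=\int_0^t\Big[\tfrac12\sum_{i,j\le d}\nabla^2_{\omega ij}G_f(s,Y^{s^-})\big(c^{\hat Y ij}_s-c^{\hat X ij}_s\big)+\int_{\RR^d}H_{G_f}(s,Y^{s^-},x)\big(K^{\hat Y}_s(dx)-K^{\hat X}_s(dx)\big)\Big]\,dA^{\hat Y}_s.
\]
By the standing hypothesis that \eqref{eq:nonpos} holds $dA^{\hat Y}\times Q^Y$ almost surely, the bracketed integrand is non-positive, so $Z$ is decreasing and $-Z\in\mathscr{A}_{loc}^+$; hence $G_f(t,Y^t)=G_f(0,x_0)+M_t+Z_t$ is a local $Q^Y$-supermartingale.

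The one genuinely non-routine step, shared with the preceding theorems, is the passage from the \emph{local} supermartingale property to a \emph{true} supermartingale: here I would invoke Assumption~$(iv)$, that $(G_f(t,Y^t)^-)_{t\in[0,T]}$ is of class (DL), which supplies the uniform integrability upgrading the local supermartingale to a proper one and thereby validates the terminal comparison. For the reversed conclusion the argument is symmetric: reversing \eqref{eq:nonpos} makes the integrand in $Z$ non-negative, so the process is a local submartingale, and the class-(DL) hypothesis on $(G_f(t,Y^t)^+)_{t\in[0,T]}$ upgrades it to a true submartingale, giving $E_{Q^Y}[f(Y^T)]\ge E_{Q^X}[f(X^T)]$. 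I do not expect a serious obstacle, as this corollary is by design a repackaging of Theorems~\ref{thm:comparisonpathemmdcx} and~\ref{thm:comparisonpathemmcx} in which \eqref{eq:nonpos} is taken as a direct assumption; the only points demanding care are the bookkeeping in the drift reduction — correctly using $(ii)$ and $(v)$ to eliminate the $\cD G_f$ and $b$ contributions so that exactly the left-hand side of \eqref{eq:nonpos} survives — and the class-(DL) localization, which is the standard technical hurdle common to all three proofs.
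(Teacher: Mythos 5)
Your proposal is correct and follows essentially the same route as the paper: the paper's proof simply observes that the process $Z$ from the proof of Theorem \ref{thm:comparisonpathemmdcx} is decreasing by the assumed inequality \eqref{eq:nonpos}, so that $(G_f(t,Y^t))_t$ is a local supermartingale, upgraded to a true one by the class-(DL) assumption. Your write-up merely spells out the drift reduction via Assumptions $(ii)$ and $(v)$ that the paper leaves implicit by reference to the earlier proof.
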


\begin{proof}
The process $Z$ from the proof of Theorem \ref{thm:comparisonpathemmdcx} is by inequality \eqref{eq:nonpos} decreasing and hence $G_f$ is a supermartingale. The inverse inequality follows since $Z$ then is increasing and hence $G_f$ is a submartingale.
\end{proof}

The Girsanov transform can be used to compare the expectation under different e.m.m. This leads to the path-dependent version of Corollary 3.8 in \citet{KR19}. By Girsanov's theorem only the compensator of the jump measure changes, the predictable quadratic variation of the continuous martingale part and the increasing process of a good version of the semimartingale characteristics remain the same, cf. \citet[Theorem III.3.24]{JS03}. 

\begin{corollary}[Comparison of e.m.m.]
\label{cor:girsanovpathemm}
Let $X$ be a semimartingale. Let $Q^1$ and $Q^2$ be equivalent local martingale measures for $X$. We denote the particular semimartingale characteristics of $X$ by superscript. Assume that $f(X^T) \in L^1(Q^1) \cap L^1(Q^2)$ and that
\begin{enumerate}
\item[(i)] $G_f \in C_{loc}^{1,2}(\Lambda_T^d)$,

\item[(ii)] $U_t^X G_f(t,X^{t^-}) = 0$, $dA^{\hat{X}} \times Q^1$ almost surely where $U_t^X$ is defined as in \eqref{eq:kolmogorovbackwardpathemm} with semimartingale characteristics of $X$ under $Q^2$;

\item[(iii)] $\big| H_{G_f} \big| \ast \mu^X \in \mathscr{A}_{loc}^+$;

\item[(iv)] $(G_f(t, X^t)^-)_{t \in [0,T]}$ is of class (DL);

\item[(v)] The kernels $K^1$ and $K^2$ are $dA^{\hat{X}} \times Q^1$ almost surely ordered for all $t \in [0,T]$:
\begin{align*}
\int_{\RR^d} H_{G_f}(t,X_{t^-},x) K^1_t(dx) \le&~ \int_{\RR^d} H_{G_f}(t,X_{t^-},x) K^2_t(dx).
\end{align*}
\end{enumerate} 

Then it holds
\begin{align*}
E_{Q^1}\left [f(X^T)\right ] \le E_{Q^2}\left [f(X^T)\right ].
\end{align*}
If the inequality in $(v)$ is reversed and $(G_f(t, X^t)^+)_{t \in [0,T]}$ is of class (DL), then:
\begin{align*}
E_{Q^1}\left [f(X^T)\right ] \ge E_{Q^2}\left [f(X^T)\right ].
\end{align*} 
\end{corollary}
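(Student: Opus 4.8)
The plan is to realize $G_f$ as the $Q^2$-valuation functional $G_f(t,\omega) = E_{Q^2}[f(X^T)\mid X^t = \omega^t]$ and to show that the evaluated process $(G_f(t,X^t))_{t\in[0,T]}$ is a $Q^1$-supermartingale. Granting this, the two boundary identities $G_f(T,X^T) = f(X^T)$ and $G_f(0,X^0) = G_f(0,x_0) = E_{Q^2}[f(X^T)]$ (the latter because $X_0 = x_0$ is deterministic) combine with the supermartingale inequality to give $E_{Q^1}[f(X^T)] = E_{Q^1}[G_f(T,X^T)] \le G_f(0,x_0) = E_{Q^2}[f(X^T)]$, which is the assertion.

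To produce the supermartingale property I would follow the scheme of the proof of Theorem \ref{thm:comparisonpathemmdcx}. Since $G_f \in C^{1,2}_{loc}(\Lambda_T^d)$ by (i), the functional It\^o formula applies to $G_f(t,X^t)$ under $Q^1$; using (iii) to compensate the jump integral and combining the $dX$-integral (a $Q^1$-local martingale, as $X$ is a $Q^1$-local martingale) with the compensated jumps into a single local martingale $M$, I obtain a decomposition $G_f(t,X^t) = G_f(0,x_0) + M_t + \int_0^t U^1_s G_f(s,X^{s^-})\,dA^{\hat X}_s$, where I write $U^i$ for the operator of \eqref{eq:kolmogorovbackwardpathemm} formed with the $Q^i$-characteristics. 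It then suffices to show that $U^1_s G_f \le 0$ holds $dA^{\hat X}\times Q^1$ almost surely, since then the finite-variation part is decreasing and $(G_f(t,X^t))_t$ is a $Q^1$-local supermartingale.

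The decisive step is the cancellation produced by Girsanov's theorem. By \citet[Theorem III.3.24]{JS03} the passage from $Q^1$ to $Q^2$ leaves the increasing process $A^{\hat X}$ and the continuous second characteristic $c$ invariant and modifies only the compensator kernel $K$; the horizontal term $\cD G_f\,b$ involves only the deterministic time drift and is likewise unchanged. Consequently $U^1$ and $U^2$ agree on every term but the jump integral, so that
\begin{align*}
U^1_s G_f(s,X^{s^-}) = U^2_s G_f(s,X^{s^-}) + \int_{\RR^d} H_{G_f}(s,X^{s^-},x)\,\big(K^1_s(dx) - K^2_s(dx)\big).
\end{align*}
Assumption (ii) is exactly $U^2_s G_f = 0$ ($dA^{\hat X}\times Q^1$ a.s.; the $Q^1$-qualifier is legitimate because $Q^1 \sim Q^2$), whence $U^1_s G_f = \int_{\RR^d} H_{G_f}\,(K^1_s - K^2_s)(dx) \le 0$ by the kernel ordering (v). This is the analogue of the sign analysis of the process $Z$ in Theorem \ref{thm:comparisonpathemmdcx}, here reduced to the jump term alone because the continuous characteristics cancel.

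Finally I would upgrade the local supermartingale to a proper one using (iv), exactly as in Theorem \ref{thm:comparisonpathemmdcx}: class (DL) of $(G_f(t,X^t)^-)_t$ turns the $Q^1$-local supermartingale into a true $Q^1$-supermartingale, which yields the inequality. The reversed statement is obtained symmetrically, with the ordering in (v) reversed so that $U^1_s G_f \ge 0$ and $(G_f(t,X^t))_t$ becomes a $Q^1$-submartingale, class (DL) of the positive part making it proper. I expect the only genuinely delicate point to be the Girsanov bookkeeping, namely confirming that every term of the operator except the jump kernel is left invariant and that the $Q^1$- and $Q^2$-almost-sure qualifiers may be interchanged by equivalence; the rest is a direct specialization of the comparison scheme already established in the preceding theorems.
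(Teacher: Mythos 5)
Your proposal is correct and is essentially the argument the paper intends: the paper's proof is only a two-sentence sketch referring to the functional It\^o formula and the path-independent case, and your write-up fills in exactly the steps it relies on, namely the Girsanov invariance of $A^{\hat X}$ and $c$ noted in the remark preceding the corollary, the reduction $U^1_sG_f=U^2_sG_f+\int H_{G_f}\,(K^1_s-K^2_s)(dx)$ combined with Assumption (ii), and the class (DL) upgrade from local to true supermartingale. No substantive deviation from the paper's route.
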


\begin{proof}
This follows with help of the functional It\^{o} formula in a similar way as in the path independent case replacing the horizontal derivative of $G_f$ by the vertical derivatives. This replacement is possible by Assumption~$(ii)$.
\end{proof}

\subsection{Comparison results under the semimartingale measure P}

The following results are versions of Theorems \ref{thm:comparisonpathemmdcx} and \ref{thm:comparisonpathemmcx} under the semimartingale measure $P$. Let $X$ and $Y$ be special semimartingales. Then the space-time processes $\hat{X}$ and $\hat{Y}$ are special semimartingales and we can choose for both semimartingales the same integrator process $A$ for a good version of the semimartingale characteristics, for details see \citet[Section 4.4.]{Ko19}.

We adapt the non-anticipative value functional $G_f$ from equation \eqref{eq:backwardfunctionalpath} to $P$: 
\begin{align*}
G_f (t,\omega) := E[f(X^T)|X^t=\omega^t].
\end{align*}
In the path-independent comparison under $P$ in \citet{KR19} it is assumed that $G_f(t,\cdot)$ is an increasing function for all $t \in [0,T]$ in order to control the first partial derivative. To control the first vertical derivative of non-anticipative functionals we introduce vertical monotonicity.

\begin{definition}
A non-anticipative functional $F: \Lambda_T^d \to \RR$ is called \emph{vertically monotone} on $U \subset \Lambda_T^d$ if for all $(t,\omega) \in U$ there exists a neighbourhood $V \subset \RR^d$ of \/$0$ such that the map 
\begin{align*}
V &\to \RR\\
e &\to F(t,\omega^t + e \ind_{[t,T]})
\end{align*}
is monotone in $e$. 
\end{definition}

This definition guarantees that the first vertical derivative is non-negative or non-positive if it exists. 

\begin{theorem}[Vertically increasing and vertically directional convex comparison under P]
\label{thm:orderingpathidcxp}
Let $X,Y$ be special semimartingales and let $X_0 = Y_0 = x_0$ almost surely. Consider a function $f \in L^1(P^{X^T}) \cap L^1(P^{Y^T})$ and assume that
\begin{itemize}
\item[(i)] $G_f \in C_{loc}^{1,2}(\Lambda_T^d)$ and $G_f$ is vertically directionally convex and vertically increasing on $\Lambda_T^d$;

\item[(ii)] $\bar{U}_t G_f(t,Y^{t^-}) = 0$ holds $dA \times P$ almost surely for all $t \in [0,T]$, where $\bar{U}$ is defined as in \eqref{eq:kolmogorovbackwardpathp} with the characteristics of $\hat{X}$;

\item[(iii)] $|H_{G_f} | \ast \mu^Y \in \mathscr{A}_{loc}^+$;

\item[(iv)] $(G_f(t, Y^t)^-)_{t \in [0,T]}$ is of class (DL);

\item[(v)] The differential characteristics are $dA \times P$ almost surely ordered:
\begin{align*}
b_t^{\hat{Y} i} \le&~ b_t^{\hat{X} i},\\
c_t^{\hat{Y} ij} \le&~ c_t^{\hat{X} ij},\\
\int_{\RR^d} H_{G_f}(t,Y^{t^-},x) K^{\hat{Y}}_t(dx) \le&~ \int_{\RR^d} H_{G_f}(t,Y^{t^-},x) K^{\hat{X}}_t(dx).
\end{align*}
\end{itemize} 

Then it holds:
\begin{align*}
E[f(Y^T)] \le E[f(X^T)].
\end{align*}
If the inequalities in $(v)$ are reversed and $(G_f(t, Y^t)^+)_{t \in [0,T]}$ is of class (DL), we get 
\begin{align*}
E[f(Y^T)] \ge E[f(X^T)].
\end{align*}
\end{theorem}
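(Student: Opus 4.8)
The plan is to follow the same route as the proof of Theorem~\ref{thm:comparisonpathemmdcx}, the only structural novelties being the presence of a genuine drift term (under $P$ the process $Y$ is no longer a local martingale, so its drift does not vanish) and the use of the special-semimartingale backwards operator $\bar{U}$ from Proposition~\ref{prop:kolmogorovbackwardpathp} in place of $U$. Concretely, I would first reduce the claim to the statement that $(G_f(t,Y^t))_{t\in[0,T]}$ is a $P$-supermartingale. Indeed, since $G_f(T,Y^T)=f(Y^T)$ and, because $X_0=Y_0=x_0$ is deterministic, $G_f(0,x_0)=E[f(X^T)]$, the supermartingale inequality $E[G_f(T,Y^T)]\le G_f(0,x_0)$ immediately yields $E[f(Y^T)]\le E[f(X^T)]$.

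To establish the supermartingale property, I would apply the functional It\^o formula \eqref{eq:pathdepito} to $(G_f(t,Y^t))$, which is licit by Assumption~$(i)$ since $G_f\in C^{1,2}_{loc}(\Lambda_T^d)$. Writing the canonical decomposition of $Y$ with its differential characteristics $(b^{\hat{Y}},c^{\hat{Y}},K^{\hat{Y}})$ relative to the common integrator $A$ (fixed in the preamble to this subsection), I would collect the integral against $Y^c$ and the compensated jump integral (finite by Assumption~$(iii)$) into a single local martingale $M$, exactly as in Proposition~\ref{prop:kolmogorovbackwardpathp}. This leaves a predictable finite-variation part $Z_t=\int_0^t \bar{U}^Y_s G_f(s,Y^{s^-})\,dA_s$, where $\bar{U}^Y$ is the operator of \eqref{eq:kolmogorovbackwardpathp} built from the characteristics of $\hat{Y}$. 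The crucial step is to subtract the backwards equation supplied by Assumption~$(ii)$, namely $\bar{U}^X_s G_f(s,Y^{s^-})=0$ with the characteristics of $\hat{X}$; because the integrator $A$ is shared and the time component of the space-time process is deterministic, the horizontal-derivative (time-drift) contributions coincide and cancel, leaving a difference of characteristics,
\begin{align*}
\bar{U}^Y_s G_f = \sum_{i\le d}\nabla_{\omega i}G_f\,(b^{\hat{Y} i}_s-b^{\hat{X} i}_s) + \frac{1}{2}\sum_{i,j\le d}\nabla^2_{\omega ij}G_f\,(c^{\hat{Y} ij}_s-c^{\hat{X} ij}_s) + \int_{\RR^d}H_{G_f}\,(K^{\hat{Y}}_s-K^{\hat{X}}_s)(dx),
\end{align*}
all evaluated at $(s,Y^{s^-})$.

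Now I would check that each of these three summands is $dA\times P$ almost surely non-positive. The first is non-positive because vertical monotonicity (vertically increasing) forces $\nabla_{\omega i}G_f\ge 0$ while Assumption~$(v)$ gives $b^{\hat{Y} i}\le b^{\hat{X} i}$; the second because vertical directional convexity gives $\nabla^2_{\omega ij}G_f\ge 0$ for all $i,j$ and Assumption~$(v)$ gives the componentwise ordering $c^{\hat{Y} ij}\le c^{\hat{X} ij}$ (here, in contrast to the purely convex case of Theorem~\ref{thm:comparisonpathemmcx}, no eigendecomposition is needed); and the third directly by the kernel inequality in Assumption~$(v)$. Hence $Z$ is decreasing, so $G_f(t,Y^t)=G_f(0,x_0)+M_t+Z_t$ is a local $P$-supermartingale, which Assumption~$(iv)$ (class (DL) of the negative part) upgrades to a genuine supermartingale. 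The reverse inequality follows identically once the orderings in~$(v)$ are reversed, so that $Z$ increases, and $(G_f(t,Y^t)^+)$ is assumed of class (DL). I expect the only genuinely delicate points to be the clean cancellation of the time-drift term, which rests on the shared integrator $A$, and the class-(DL) localization passing from the local to the true (super)martingale; the term-by-term sign analysis is then routine.
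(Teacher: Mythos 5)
Your proposal is correct and follows essentially the same route as the paper's proof: establish the supermartingale property of $(G_f(t,Y^t))_{t\in[0,T]}$ via the functional It\^o formula, subtract the backwards equation of Assumption~$(ii)$ so that only the difference of characteristics remains, and verify term-by-term non-positivity using vertical monotonicity for the drift, the componentwise second-derivative positivity characterizing vertical directional convexity for the diffusion part, and the kernel ordering for the jumps, before upgrading from a local to a true supermartingale via class (DL). Your write-up is in fact more explicit than the paper's (which compresses these steps into a reference to the e.m.m.\ case), but the argument is the same.
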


\begin{proof}
Analogously to the comparison under equivalent martingale measures we establish that $(G_f(t,Y^t))_{t \in [0,T]}$ is a supermartingale. Therefore, using the functional It\^o formula we have to verify that $dA \times P$ almost surely it holds
\begin{align*}
\sum_{i \le d} \nabla_{\omega i} G_f(s, Y^{s^-}) \left(b^{\hat{Y} i}_s - b^{\hat{X}i}_s \right) + \frac{1}{2} \sum_{i,j \le d} \nabla^2_{\omega ij} G_f(s,Y^{s^-}) \left (c^{\hat{Y}ij}_s - c^{\hat{X}ij}_s\right )\\
+ \int_{\RR^d} H_{G_f}(s, Y^{s^-},x) \left (K^{\hat{Y}}_s(dx) - K^{\hat{X}}_s(dx)\right ) \le 0.
\end{align*}
This process however is non-positive $dA \times P$ almost surely by Assumption~$(v)$ and using that $G_f$ is vertically increasing and vertically directional convex. Assumption~$(iv)$ then yields the proper supermartingale property.\\
If the inequalities in $(v)$ are reversed and $(G_f(t, Y^t)^+)_{t \in [0,T]}$ is of class (DL), $G_f$ is a submartingale.
\end{proof}

Next we transfer the comparison result to the case when $G_f$ is vertically convex and vertically increasing. 

\begin{theorem}[Vertically increasing and vertically convex comparison under P]
\label{thm:comppvcx}
Let $X,Y$ be special semimartingales and let $X_0 = x_0 = Y_0$ almost surely. Let $f \in L^1(P^{X^T}) \cap L^1(P^{Y^T})$. Assume that
\begin{itemize}
\item[(i)] $G_f \in C^{1,2}(\Lambda_T^d)$ and $G_f$ is vertically convex and vertically increasing on $\Lambda_T$;

\item[(ii)] -- $(iv)$ of Theorem \ref{thm:orderingpathidcxp} hold;

\item[(v)] The differential characteristics are $dA \times P$ almost surely ordered for all $i \le d$:
\begin{align*}
b_t^{\hat{Y} i} \le&~ b_t^{\hat{X} i},\\
c_t^{\hat{Y}} \le&_{psd}~ c_t^{\hat{X}},\\
\int_{\RR^d} H_{G_f}(t,Y^{t^-},x) K^{\hat{Y}}_t(dx) \le&~ \int_{\RR^d} H_{G_f}(t,Y^{t^-},x) K^{\hat{X}}_t(dx).
\end{align*}
\end{itemize} 

Then it holds that
\begin{align*}
E[f(Y^T)] \le E[f(X^T)].
\end{align*}
If in $(v)$ the inequalities are reversed and $(G_f(t, Y^t)^+)_{t \in [0,T]}$ is of class (DL), we get
\begin{align*}
E[f(Y^T)] \ge E[f(X^T)].
\end{align*}
\end{theorem}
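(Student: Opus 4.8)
The plan is to show, exactly as in the two preceding theorems, that the process $(G_f(t,Y^t))_{t\in[0,T]}$ is a $P$-supermartingale; the assertion then follows from the supermartingale inequality between $0$ and $T$, since $E[f(Y^T)] = E[G_f(T,Y^T)] \le G_f(0,x_0) = E[f(X^T)]$. To this end I would first apply the functional It\^o formula to $G_f(t,Y^t)$ under the semimartingale measure $P$ (so that $Y$ now carries a genuine drift), writing it as $G_f(0,x_0)$ plus a local martingale $M_t$ plus a predictable finite-variation part $Z_t = \int_0^t(\cdots)\,dA_s$ whose integrand is built from the characteristics $(b^{\hat Y}, c^{\hat Y}, K^{\hat Y})$ of $\hat Y$. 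Invoking the Kolmogorov backwards equation in Assumption~$(ii)$, namely $\bar U_t G_f(t,Y^{t^-}) = 0$ with the characteristics of $\hat X$, I would subtract this vanishing $\hat X$-drift operator from the $\hat Y$-drift integrand, reducing the whole claim to verifying that $dA \times P$ almost surely
\begin{align*}
\sum_{i \le d} \nabla_{\omega i} G_f(s, Y^{s^-}) \left(b^{\hat{Y} i}_s - b^{\hat{X}i}_s \right) + \frac{1}{2} \sum_{i,j \le d} \nabla^2_{\omega ij} G_f(s,Y^{s^-}) \left(c^{\hat{Y}ij}_s - c^{\hat{X}ij}_s\right)\\
+ \int_{\RR^d} H_{G_f}(s, Y^{s^-},x) \left(K^{\hat{Y}}_s(dx) - K^{\hat{X}}_s(dx)\right) \le 0.
\end{align*}

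The three summands would then be handled separately, combining the arguments of Theorems~\ref{thm:orderingpathidcxp} and~\ref{thm:comparisonpathemmcx}. For the drift term I would use vertical monotonicity: since $G_f$ is vertically increasing each $\nabla_{\omega i} G_f \ge 0$, and since $b^{\hat Y i}_s \le b^{\hat X i}_s$ by Assumption~$(v)$, the product is non-positive, exactly as in the proof of Theorem~\ref{thm:orderingpathidcxp}. The kernel term is non-positive directly by the kernel ordering in Assumption~$(v)$. The only term requiring the new convexity hypothesis is the second-derivative term, and here I would reuse the eigendecomposition device from the proof of Theorem~\ref{thm:comparisonpathemmcx}: by the positive semidefinite ordering $c^{\hat Y}_s \le_{psd} c^{\hat X}_s$, the matrix $c^{\hat X}_s - c^{\hat Y}_s$ has a spectral decomposition $\sum_{k \le d} \lambda_k e_k e_k'$ with eigenvalues $\lambda_k \ge 0$, so that
\begin{align*}
\frac12 \sum_{i,j\le d} \nabla^2_{\omega ij} G_f(s,Y^{s^-}) \left(c^{\hat Y ij}_s - c^{\hat X ij}_s\right) = -\frac12 \sum_{k\le d} \lambda_k\, e_k'\, \nabla^2_\omega G_f(s,Y^{s^-})\, e_k \le 0,
\end{align*}
the last inequality following from positive semidefiniteness of $\nabla^2_\omega G_f$, which vertical convexity of $G_f$ guarantees.

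With the integrand of $Z$ shown to be non-positive, $-Z \in \mathscr{A}^+_{loc}$ and $(G_f(t,Y^t))_{t\in[0,T]}$ is a local $P$-supermartingale; here Assumption~$(iii)$ secures the compensation of the jump integral, and Assumption~$(iv)$, class~(DL) of the negative part, upgrades the local supermartingale to a genuine one, yielding the asserted inequality. For the reversed statement I would flip all inequalities in Assumption~$(v)$ and use class~(DL) of $(G_f(t,Y^t)^+)_{t\in[0,T]}$ to obtain a submartingale. I do not expect a genuine obstacle, since the result is essentially a synthesis of the two earlier theorems; the only points worth double-checking are the sign bookkeeping when the positive semidefinite ordering is converted into the spectral sum, and the verification that the integrability and class~(DL) hypotheses legitimately promote the local (super/sub)martingale to a true one.
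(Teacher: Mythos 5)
Your proposal is correct and follows essentially the same route as the paper: apply the functional It\^o formula under $P$, use the Kolmogorov backwards equation from Assumption~$(ii)$ to reduce everything to the displayed inequality, handle the drift term via vertical monotonicity, the diffusion term via the eigendecomposition argument borrowed from the proof of Theorem~\ref{thm:comparisonpathemmcx}, and the jump term via the kernel ordering, before upgrading the local supermartingale using class~(DL). The paper's proof is merely a more compressed version of exactly this synthesis of Theorems~\ref{thm:orderingpathidcxp} and~\ref{thm:comparisonpathemmcx}.
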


\begin{proof}
Again using the functional It\^o formula we have to verify, that $dA \times P$ a.s.
\begin{align*}
\sum_{i \le d} \nabla_{\omega i} G_f(s, Y^{s^-}) \left(b^{\hat{Y} i}_s - b^{\hat{X}i}_s \right) + \frac{1}{2} \sum_{i,j \le d} \nabla^2_{\omega ij} G_f(s,Y^{s^-}) \left (c^{\hat{Y}ij}_s - c^{\hat{X}ij}_s\right )\\
+ \int_{\RR^d} H_{G_f}(s,Y^{s^-},x) \left (K^{\hat{Y}}_s(dx) - K^{\hat{X}}_s(dx)\right ) \le 0.
\end{align*}
The first term is non positive due to Assumption~$(v)$ and the fact that $G_f$ is vertically increasing in the second variable. The remaining part is non-positive as in the proof of Theorem \ref{thm:comparisonpathemmcx}. By Assumption~$(iv)$ it follows that $(G_f(t,Y^t))_{t \in[0,T]}$ is a proper supermartingale.\\
If the inequalities in $(v)$ are reversed and $(G_f(t, Y^t)^+)_{t \in [0,T]}$ is of class (DL), then $G_f$ is a submartingale.
\end{proof}

As before the key inequality of the proof can be used to formulate a comparison result without the assumption of vertical convexity and vertical monotonicity on the functional $G_f$.

\begin{corollary}[General comparison condition under P]
Let $X,Y$ be special semimartingales and let $X_0 = x_0 = Y_0$ almost surely. Let $f \in L^1 (P^{X^T}) \cap L^1(P^{Y^T})$. Assume that $G_f \in C_{loc}^{1,2}(\Lambda_T^d)$ and that $(ii)$--$(iv)$ of Theorem \ref{thm:orderingpathidcxp} hold. Further assume that $dA \times P$ almost surely
\begin{align}
\begin{split}
\label{eq:orderingoass2p}
&~\sum_{i \le d} \nabla_{\omega i} G_f(s, Y^{s^-}) \left(b^{\hat{Y} i}_s - b^{\hat{X}i}_s \right) + \frac{1}{2} \sum_{i,j \le d} \nabla^2_{\omega ij} G_f(s,Y^{s^-}) \left (c^{\hat{Y}ij}_s - c^{\hat{X}ij}_s\right ) \\
&~ + \int_{\RR^d} H_{G_f}(s,Y^{s^-},x) \left (K^{\hat{Y}}_s(dx) - K^{\hat{X}}_s(dx)\right ) \le 0.
\end{split}
\end{align}
Then it holds that
\begin{align*}
E[f(X^T)] \le E[f(Y^T)].
\end{align*}
If inequality \eqref{eq:orderingoass2p} is reversed and $(G_f(t, Y^t)^+)_{t \in [0,T]}$ is of class (DL), then
\begin{align*}
E\left [f(X^T)\right ] \ge E\left [f(Y^T)\right ].
\end{align*}
\end{corollary}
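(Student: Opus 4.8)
My plan is to obtain this statement as the direct analogue, under the physical measure $P$, of the general e.m.m.\ comparison corollary, reusing verbatim the supermartingale mechanism from the proofs of Theorems~\ref{thm:orderingpathidcxp} and~\ref{thm:comppvcx}. The only conceptual change is that the pointwise drift sign, which in those theorems was \emph{deduced} from vertical monotonicity and vertical (directional) convexity of $G_f$ together with the ordering of characteristics, is now \emph{postulated} directly as~\eqref{eq:orderingoass2p}. Consequently I would use no structural information about $G_f$ beyond $G_f\in C^{1,2}_{loc}(\Lambda_T^d)$ and treat~\eqref{eq:orderingoass2p} as the sole sign input.

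First I would apply the functional It\^o formula of Section~\ref{sec:funcito} to $G_f$ along the paths of the special semimartingale $Y$, exactly as at the start of the proof of Theorem~\ref{thm:orderingpathidcxp}. Assumption~$(iii)$, $|H_{G_f}|\ast\mu^Y\in\mathscr{A}_{loc}^+$, legitimises compensating the jump integral; collecting the $Y^c$-integral together with the compensated jumps into a local martingale $M$ produces a decomposition $G_f(t,Y^t)=G_f(0,x_0)+M_t+Z_t$ with predictable finite-variation part
\begin{align*}
Z_t=\int_0^t\Bigl[&\cD G_f(s,Y^{s^-})\,b_s^{\hat Y}+\sum_{i\le d}\nabla_{\omega i}G_f(s,Y^{s^-})\,b_s^{\hat Y i}\\
&+\tfrac12\sum_{i,j\le d}\nabla^2_{\omega ij}G_f(s,Y^{s^-})\,c_s^{\hat Y ij}+\int_{\RR^d}H_{G_f}(s,Y^{s^-},x)\,K_s^{\hat Y}(dx)\Bigr]\,dA_s.
\end{align*}

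The crux is to identify the integrand of $Z$ with the left-hand side of~\eqref{eq:orderingoass2p}. Here Assumption~$(ii)$ is decisive: since $\bar U_tG_f(t,Y^{t^-})=0$ holds $dA\times P$-a.s.\ for the characteristics of $\hat X$, I subtract this vanishing quantity from the $\hat Y$-integrand. Because $X$ and $Y$ are special semimartingales, a common good version of the increasing process $A$ can be chosen for $\hat X$ and $\hat Y$ (cf.~\citet[Section~4.4]{Ko19}); in particular the time components share the same drift, so the horizontal term $\cD G_f\,b^{\hat Y}$ cancels against its $\hat X$-counterpart and the three remaining terms collapse to the differences $(b^{\hat Y i}-b^{\hat X i})$, $(c^{\hat Y ij}-c^{\hat X ij})$ and $(K^{\hat Y}-K^{\hat X})$ appearing in~\eqref{eq:orderingoass2p}. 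By hypothesis this integrand is $dA\times P$-a.s.\ non-positive, so $Z$ is non-increasing and $(G_f(t,Y^t))_{t\in[0,T]}$ is a local $P$-supermartingale; Assumption~$(iv)$, that $(G_f(t,Y^t)^-)_{t\in[0,T]}$ is of class (DL), upgrades it to a genuine supermartingale, from which the asserted comparison follows exactly as in Theorems~\ref{thm:orderingpathidcxp} and~\ref{thm:comppvcx}. If the inequality in~\eqref{eq:orderingoass2p} is reversed the integrand is non-negative, $Z$ is non-decreasing, $(G_f(t,Y^t))_{t\in[0,T]}$ is a local submartingale, and the class-(DL) hypothesis on $(G_f(t,Y^t)^+)$ turns it into a true submartingale, yielding the reversed comparison.

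I expect the main obstacle to be precisely this drift-reduction step: making rigorous that subtracting the $\hat X$-Kolmogorov identity of Assumption~$(ii)$ eliminates the horizontal derivative and leaves exactly the characteristic differences of~\eqref{eq:orderingoass2p}. This hinges on the common choice of $A$ and on a term-by-term matching of the three $dA$-integrals, together with the integrability furnished by~$(iii)$ that permits the jump compensation; the remaining passage from the local to the global (super/sub)martingale statement is then routine once the class-(DL) hypotheses in~$(iv)$ are in force.
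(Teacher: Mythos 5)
Your proposal is correct and follows essentially the same route as the paper: the paper's own proof is a one-line reference back to the proof of Theorem~\ref{thm:comppvcx} (functional It\^o formula along $Y$, jump compensation via $(iii)$, cancellation of the drift through the Kolmogorov identity $(ii)$ with the common integrator $A$, sign of the remaining integrand from \eqref{eq:orderingoass2p}, then class (DL) from $(iv)$), and you have simply written that argument out in full. One small caveat: your supermartingale argument yields $E[f(Y^T)]\le E[f(X^T)]$, consistent with Theorems~\ref{thm:orderingpathidcxp} and~\ref{thm:comppvcx}, whereas the corollary's displayed conclusion $E[f(X^T)]\le E[f(Y^T)]$ evidently has $X$ and $Y$ inadvertently swapped in the paper, so the direction you prove is the intended one.
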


\begin{proof}
As in the proof of Theorem \ref{thm:comppvcx} inequality \eqref{eq:orderingoass2p} implies that $(G_f(t,Y^t))_{t \in [0,T]}$ is a supermartingale or submartingale respectively.
\end{proof}

\section{Results on regularity and applications}

The comparison results in Section \ref{sec:pathdepcomp} need various properties of the valuation functional $G_f$, like continuity, vertical/horizontal differentiability and convexity. In this section we give some results establishing these regularity properties and some applications to comparison results.

We first discuss the regularity of $G_f$. For notational simplicity we consider the processes under the semimartingale measure $P$. 

An example for a vertically differentiable conditional expectation is given in \citet[Proposition 4.4]{Ri15} who states conditions such that the conditional expectation of a path-dependent function of a semimartingale can be represented as horizontally differentiable non-anticipative functional. The underlying process is a stochastic exponential defined by the SDE
\begin{align*}
dS_t = S_t \sigma_t dB_t,
\end{align*}
where $B$ is a standard Brownian motion and $(\sigma_t)_{t \in [0,T]}$ is a non-negative adapted process such that $S$ is a $L^2$-martingale.

We modify this approach to transfer it to non-continuous processes. Therefore, we consider the probability space $(\Omega,(\cF_t)_{t \in [0,T]},\cF,P)$, where $\Omega = D([0,T],\RR^d)$, $\cF$ is the Borel sigma-field and $(\cF_t)_{t \in [0,T]}$ is the filtration generated by the canonical process, $X_t(\omega) = \omega(t)$. We assume that the canonical process is a semimartingale. 

In the center of our considerations in the previous section is the valuation functional $G_f: \Lambda^d_T \to \RR$,
\begin{align*}
G_f(t,\omega) := E\left [\left .f(X^T)\right |X^t = \omega^t\right ].
\end{align*}
In the setting of this section this is the same as the expectation w.r.t. factorized conditional probability of $X^T$ given $\cF_t$ due to the fact that $(\cF_t)_{t \in [0,T]}$ is the natural filtration. If the space of c\`{a}dl\`{a}g functions is equipped with the Skorokhod topology there exists a regular version of the conditional probability of $X^T$ given $X^t$ since $D([0,T],\RR^d)$ then is a Polish space. However, for the sup norm this is not valid anymore, see \citet{Bi68}. We assume in the sequel that a regular version of the conditional probability exists as in the case of processes with continuous paths. Then $G_f$ takes the form
\begin{align*}
G_f(t,\omega) = \int_{D([0,T],\RR^d)} f(\tilde{\omega}) P^{X^T|X^t=\omega^t}(d\tilde{\omega}).
\end{align*}
and, therefore, the horizontal and vertical differentiability is mainly a question of correspondent differentiability of the kernel $P^{X^T|X^t}$.

Since the metric in the space of stopped paths uses in the path component the sup norm, we need a tool to handle the sup norm of a semimartingale. This motivates the use of the class of $H^1$ semimartingales (for details see \citet{Pr05}). Without loss of generality we assume that all semimartingales in this section start in zero. For simplicity we consider one-dimensional semimartingales. Let $X$ be a semimartingale; then there exists at least one decomposition $X = M + B$, where $M$ is a local martingale and $B$ is of finite variation. Denoting for $1 \le p \le \infty$
\begin{align*}
j_p(M,B) := \left \| [M]_T^{\frac{1}{2}} + \int_0^T |dB_s| \right \|_{L^p},
\end{align*}
then the $H^p$ norm of $X$ is defined as
\begin{align*}
\| X \|_{H^p} = \inf_{X = M + B} j_p(M,B),
\end{align*}
where the infimum is taken over all possible semimartingale decompositions of $X$.

By \citet[Chapter V, Theorem 2]{Pr05} the $H^p$-norm allows to dominate the sup norm of $X$. This is a consequence of Burkholder's inequalities and is an important tool in the sequel. For $1 \le p \le \infty$ there exists a constant $c_p$ such that for any semimartingale $X$ with $X_0 = 0$ we have for $X^\ast := \sup_{t \in [0,T]} |X_t|$ the inequality
\begin{align}
\label{eq:domsupnorm}
\| X^\ast \|_{L^p} \le c_p \| X \|_{H^p}.
\end{align}
The following definition reminds the concatenation operators as introduced in \citet{Ri15}. In comparison we use a slightly different definition since we want the c\`{a}dl\`{a}g functions to meet in $t$.

\begin{definition}
The family of concatenation operators $(\oplus_t)_{t \in [0,T]}$ is defined by
\begin{align*}
\oplus_t : ~&D([0,T],\RR) \times D([0,T],\RR) \to D([0,T],\RR),\\
 & (\omega,\omega') \mapsto \omega \oplus_t \omega' := \omega \ind_{[0,t)} + (\omega_t + \omega' - \omega'_t) \ind_{[t,T]}.
\end{align*}
\end{definition}

The idea of the following theorem is to use Lipschitz continuity and independent increments to dominate the increments of the function under consideration. Then we are able to show the continuity and vertical and horizontal differentiability of $G_f$.

\begin{theorem}
\label{thm:examplepathdeplipschitz}
Let $X$ be a semimartingale with finite $H^1$ norm and independent increments without fixed times of discontinuity. Further, let $f: (D([0,T],\RR),\| \cdot \|_\infty) \to \RR$ be a Lipschitz continuous functional such that $E[|f(X^T)|] < \infty$. Assume that for any $\omega \in D([0,T],\RR)$ and any $t \in [0,T]$ the function 
\begin{align}
\begin{split}
\label{eq:hfromLipschitzexample}
g(\cdot,t,\omega):&~ \RR \to \RR\\
&~e ~\mapsto f(\omega + \ind_{[t,T]} e)
\end{split}
\end{align}
is twice continuously differentiable in zero such that the derivatives are Lipschitz continuous in $\omega$. Further, assume that for every $\omega, \omega' \in D([0,T],\RR)$ the function
\begin{align}
\begin{split}
\label{eq:ffromLipchitzexample}
l(\cdot, \omega, \omega'):&~ [0,T] \to \RR\\
&~t \mapsto f(\omega \oplus_t \omega')
\end{split}
\end{align}
is continuously right differentiable with derivative which is Lipschitz continuous in $\omega$. Then it follows that
\begin{align*}
G_f \in C^{1,2}_b(\Lambda_T).
\end{align*}
\end{theorem}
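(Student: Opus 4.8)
The plan is to use the independent–increments structure of $X$ to represent $G_f$ as an expectation over a fixed law and then to obtain the horizontal and vertical derivatives by differentiating under the expectation. First I would establish the representation
\begin{align*}
G_f(t,\omega) = E\big[f(\omega \oplus_t X)\big].
\end{align*}
This holds because, conditionally on $X^t = \omega^t$, the independent–increments property makes the law of $X^T$ equal to that of the path which agrees with $\omega$ on $[0,t)$ and equals $\omega_t + (X_s - X_t)$ on $[t,T]$, i.e.\ precisely $\omega\oplus_t X$. The absence of fixed times of discontinuity guarantees that this representation is well behaved in the time variable. Since $G_f(t,\cdot)$ depends only on the path up to $t$, we may replace $\omega$ by $\omega^t$ freely.

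For the vertical derivatives I would use that a vertical perturbation commutes with the concatenation, $(\omega^t + h\ind_{[t,T]})\oplus_t X = (\omega\oplus_t X) + h\ind_{[t,T]}$, whence with $g$ as in \eqref{eq:hfromLipschitzexample},
\begin{align*}
\frac{G_f(t,\omega^t + h\ind_{[t,T]}) - G_f(t,\omega)}{h} = E\!\left[\frac{g(h,t,\omega\oplus_t X) - g(0,t,\omega\oplus_t X)}{h}\right].
\end{align*}
Because $f$ is Lipschitz and $\|h\ind_{[t,T]}\|_\infty = |h|$, the difference quotient is bounded by the Lipschitz constant, so dominated convergence over the probability space gives $\nabla_\omega G_f(t,\omega) = E[\partial_e g(0,t,\omega\oplus_t X)]$. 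A second application, using twice–differentiability of $g$ at $0$ together with the Lipschitz bound on $\partial_e g$ to dominate the second-order difference quotient, yields $\nabla^2_\omega G_f(t,\omega) = E[\partial_e^2 g(0,t,\omega\oplus_t X)]$.

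The horizontal derivative is where I expect the main obstacle. Writing $G_f(t+h,\omega^t) = E[l(t+h,\omega^t,X)]$ with $l$ as in \eqref{eq:ffromLipchitzexample}, the difference quotient converges pointwise to $\partial_t^+ l(t,\omega^t,X)$ by the assumed right–differentiability of $l$. Unlike the vertical case, however, the time perturbation alters the concatenated path in an $X$–dependent way, and the Lipschitz bound $|l(t+h,\omega^t,X) - l(t,\omega^t,X)| \le L\sup_{s\in[t,t+h]}|X_s - X_t|$ does not provide a constant dominating function after division by $h$. This is precisely where the $H^1$ assumption is used: inequality \eqref{eq:domsupnorm} supplies the integrability $E[X^\ast] < \infty$ needed for uniform integrability of the difference quotients, while the no–fixed–discontinuity assumption secures the almost sure pointwise convergence. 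Together they justify the interchange and give $\cD G_f(t,\omega) = E[\partial_t^+ l(t,\omega^t,X)]$.

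Finally, the regularity demanded by $C^{1,2}_b(\Lambda_T)$ would follow from these three representations. Left–continuity at fixed times of $G_f$, $\nabla_\omega G_f$ and $\nabla^2_\omega G_f$, and continuity at fixed times of $\cD G_f$, follow by dominated convergence once one checks that $\tilde\omega\oplus_s X \to \omega\oplus_t X$ in sup norm as $(s,\tilde\omega)\to(t,\omega)$ with $s\uparrow t$ — again using the absence of a fixed discontinuity at $t$ to close the intermediate gap — combined with continuity of $f$ and the Lipschitz continuity in $\omega$ of the derivatives of $g$ and $l$. The boundedness–preserving property follows from those same Lipschitz-in-$\omega$ bounds: for $\omega([0,t])\subset K$ and $t\le t_0$ one has $\|\omega\oplus_t X\|_\infty \le \sup_{x\in K}|x| + 2X^\ast$, so each derivative is dominated by a constant depending on $K,t_0$ plus a multiple of $E[X^\ast]$, finite by \eqref{eq:domsupnorm}. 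I expect the uniform–integrability argument controlling the horizontal difference quotients via the $H^1$ norm to be the decisive technical step.
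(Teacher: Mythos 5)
Your proposal follows essentially the same route as the paper's proof: reduce the conditional expectation to $E[f(\omega\oplus_t X^T)]$ via independent increments, differentiate under the expectation using the Lipschitz bounds and dominated convergence for the vertical derivatives, invoke the $H^1$ domination of the sup norm \eqref{eq:domsupnorm} together with the absence of fixed discontinuities for the horizontal derivative and the continuity statements, and derive the boundedness-preserving property from the same estimates. The points where your sketch is terse (the domination of the horizontal difference quotients, the $L^1$ rather than pathwise convergence of $\tilde\omega\oplus_s X^T$) are exactly the points the paper treats with the same brevity, so this is a faithful reconstruction.
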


\begin{proof}
We obtain by the independence of increments that
\begin{align*}
G_f(t, \omega) = &~ E[f(X^T)| X^t = \omega^t]\\
= &~ E\left[f(\omega \oplus_t X^T) | X^t = \omega^t \right]\\
= &~ E\left[f\left(\omega \ind_{[0,t)} + (\omega_t + X^T - X_t) \ind_{[t,T]}\right)| \cF_t \right](\omega)\\
= &~ E\left[f(\omega \oplus_t X^T) \right].
\end{align*}
Since $f$ is Lipschitz continuous, it follows that for all $\omega, \omega' \in D([0,T], \RR)$ there exists a $c >0$ such that $|f(\omega) - f(\omega')| \le c \| \omega -\omega' \|_\infty$. We show the continuity of $G_f$ by sequential continuity. Let $((t^n,\omega^n))_{n \in \NN} \subset \Lambda_T$ converge to $(t,\omega), \in \Lambda_T$. Then we have
\begin{align}
\label{eq:jointcontinuityexample}
|G_f(t,\omega) - G_f(t^n,\omega^n)| = &~\big|E\left[ f(\omega \oplus_t X^T) \right] - E\left[ f(\omega^n \oplus_{t^n} X^T) \right]\big| \nonumber \\
\le &~ E\left[ \big|f(\omega \oplus_t X^T) - f(\omega^n \oplus_{t^n} X^T) \big| \right] \nonumber \\
\le &~ c E\left[ \| (\omega \oplus_t X^T) - (\omega^n \oplus_{t^n} X^T) \|_\infty \right] \\
\le &~ c E\left[ \|(\omega - \omega^n) \ind_{[0,t \wedge t^n)} \|_\infty + \| \left( \omega_t + X^T - X_t - \omega^n \right) \ind_{[t, t^n)} \|_\infty \right. \nonumber \\
&~ + \| \left( \omega^n_{t^n} + X^T - X_{t^n} - \omega \right) \ind_{[t^n, t)} \|_\infty \nonumber\\
&~ \left. + \| \left( \omega_t + X^T - X_t - \omega^n_{t^n} - X^T + X_{t^n} \right) \ind_{[t \vee t^n,T]} \|_\infty \right]. \nonumber
\end{align}
This can be further dominated by
\begin{align*}
 c E\left[ \right .&\left .\|(\omega - \omega^n) \ind_{[0,t \wedge t^n)} \|_\infty + \| ( \omega_t - \omega^n) \ind_{[t, t^n)} \|_\infty \right. \\
&~ + \| ( X^T - X_t ) \ind_{[t, t^n)} \|_\infty + \| (\omega^n_{t^n} - \omega) \ind_{[t^n, t)} \|_\infty  \\
&~ + \| (X^T - X_{t^n}) \ind_{[t^n, t)} \|_\infty + \| ( \omega_t - \omega^n_{t^n}) \ind_{[t \vee t^n,T]} \|_\infty  \\
&~ \left.+ \| (X_{t^n} - X_t) \ind_{[t \vee t^n,T]} \|_\infty \right].
\end{align*} 
Note that for fix $n$ only one of the indicator functions $\ind_{[t^n, t)}$ and $\ind_{[t, t^n)}$ differs from zero. We consider the first term on the right-hand side. It is clearly bounded from above by $\| \omega^t - (\omega^n)^{t^n} \|_\infty $ which tends to zero by $d_\infty$ convergence. As consequence we obtain
\begin{align*}
E\left[ \|(\omega - \omega^n) \ind_{[0,t \wedge t^n)} \|_\infty \right] \le E\left[ \| \omega^t - (\omega^n)^{t^n} \|_\infty \right] = \| \omega^t - (\omega^n)^{t^n} \|_\infty \to 0.
\end{align*}
The same argument yields convergence to zero for the other terms containing $\omega$ and $\omega^n$.

Next we consider the expectation $E \left[\| ( X^T - X_t) \ind_{[t, t^n)} \|_\infty \right]$. The process therein $X^n :=(( X^T - X_t) \ind_{[t, t^n)})_{t \in [0,T]}$ is a semimartingale starting in zero, hence we can apply \eqref{eq:domsupnorm} with $p = 1$ to obtain
\begin{align*}
E \left[ \| ( X^T - X_t) \ind_{[t, t^n)} \|_\infty \right] \le c_1 \| X^n \|_{H^1}.
\end{align*}
Let $X = M + A$ be a semimartingale decomposition of $X$. Then after a restriction to $M^n :=(( M^T - M_t) \ind_{[t, t^n)} )_{t \in [0,T]}$ and $A^n :=(( A^T - A_t) \ind_{[t, t^n)})_{t \in [0,T]}$ we get that $X^n = M^n + A^n$ is a semimartingale decomposition of $X^n$. Since for each $n$ and $\omega\in \Omega$ the path $X^n(\omega)$ is just a shifted piece of the path of $X(\omega)$, we have that $[M^n]_t \le [M]_t$ and $|A^n_t| \le |A_t|$ for all $t \in [0,T]$. This means that we can dominate the $H^1$ norm of all $X^n$ by the $H^1$ norm of $X$ which is finite by assumption. Dominated convergence and right continuity then leads to
\begin{align*}
\lim_{n \to \infty} E \left[ \| (X^T - X_t) \ind_{[t,t^n)} \|_\infty \right] = E \left[ \lim_{n \to \infty} \sup_{s \in [t,t^n)} |X_s - X_t| \right] = 0.
\end{align*}
Analogously we get that
\begin{align*}
\lim_{n \to \infty} E \left[ \| (X^T - X_{t^n}) \ind_{[t^n,t)} \|_\infty \right] =&~ E \left[ \lim_{n \to \infty} \sup_{s \in [t^n,t)} |X_s - X_{t^n}| \right]\\
\le&~ E \left[ \lim_{n \to \infty} \sup_{s \in [t^n,t]} |X_s - X_{t^n}| \right]\\
=&~ E \left[ |\Delta X_t| \right]\\
=&~ 0.
\end{align*}
The last equality follows from the assumption that there are no fixed times of discontinuity. It remains to show that $E[\| (X_{t^n} - X_t) \ind_{[t \vee t^n,T]} \|_\infty]$ also tends to zero. Therefore, note that
{\small
\begin{align*}
E\left[\| (X_{t^n} - X_t) \ind_{[t \vee t^n,T]} \|_\infty \right] = E\left[ |(X_{t^n} - X_t )| \right] \le E \left[ \sup_{s \in [t^n,t]} |X_s - X_{t^n}| + \sup_{s \in [t,t^n]} |X_s - X_t|\right] .
\end{align*}}
The terms on the right-hand side are both bounded by the $H^1$ norm of $X$. It follows by dominated convergence that this tends to zero. Thus, $G_f$ is continuous.

Next we show that $G_f$ is vertically differentiable. We consider the vertical difference quotient of $G_f$
\begin{align*}
\frac{G_f(t,\omega^{h,t}) - G_f(t,\omega)}{h} = &~ \frac{1}{h} \left( E\left[f(\omega^{h,t} \oplus_t X^T) \right] - E\left[f(\omega \oplus_t X^T) \right] \right)\\
= &~ \frac{1}{h} E\left[f(\omega \oplus_t X^T + h \ind_{[t,T]}) - f(\omega \oplus_t X^T) \right].
\end{align*}
Since $f$ is Lipschitz continuous, dominated convergence yields
\begin{align*}
\nabla_\omega G_f(t,\omega) = E\left [\frac{\partial}{\partial e} g(e,t,\omega \oplus_t X^T)(0)\right ].
\end{align*}
For the second derivative we use that the first derivative of $g$ is assumed to be Lipschitz continuous in $\omega$ and get by dominated convergence 
\begin{align*}
\nabla^2_\omega G_f(t,\omega) = &~ \lim_{h \to 0} \frac{\nabla_\omega G_f(t,\omega^{h,t}) - \nabla_\omega G_f(t,\omega)}{h}\\
= &~ E\left[ \lim_{h \to 0} \frac{\frac{\partial}{\partial e} g(e,t,\omega^{h,t} \oplus_t X^T)(0) - \frac{\partial}{\partial e} g(e,t,\omega \oplus_t X^T)(0)}{h}\right]\\
=&~ E\left[ \frac{\partial^2}{\partial e^2} g(e,t,\omega \oplus_t X^T)(0) \right].
\end{align*}
We are left to show that $\nabla_\omega G_f$ and $\nabla^2_\omega G_f$ are (left-)continuous. In fact we have continuity which follows as the continuity of $G_f$ from Lipschitz continuity.

We now turn to the horizontal differentiability. Therefore, we consider the horizontal difference quotient.
\begin{align*}
\frac{G_f(t+h,\omega^t) - G_f(t,\omega^t)}{h} = \frac{E\left[ f(\omega^t \oplus_{t+h} X^T) \right] - E\left[ f(\omega^t \oplus_t X^T) \right]}{h}.
\end{align*} 
From the Lipschitz continuity of $f$ it follows as in \eqref{eq:jointcontinuityexample} that the difference is bounded by the $H^1$ norm of $X$. With dominated convergence it follows for $h \downarrow 0$ that
\begin{align*}
\cD G_f(t,\omega) = E \left[ \frac{\partial^+}{\partial t} l(t,\omega^t,X^T) \right].
\end{align*}
The continuity of the derivative now follows from the Lipschitz continuity of the derivative of $l$.

It remains to show that $G_f$ is boundedness preserving. Therefore, let be $K \subset \RR$ be compact and $t_0$ fixed. We need to show the existence of a constant $C_{K,t_0} > 0$ such that for all $t \le t_0$ and all $\omega \in D([0,T],\RR)$ we have
\begin{align*}
\omega([0,t]) \subset K \Rightarrow |G_f(t,\omega)| \le C_{K,t_0}.
\end{align*}
Since $K$ is compact it is bounded; let $k$ be this bound. We obtain from \eqref{eq:jointcontinuityexample} and the considerations thereafter that 
\begin{align*}
|G_f(t,\omega) - G_f(0,0)| \le c E[2 \| \omega^t \|_\infty + 2 \| X \|_{H^1}] \le c (2k + 2 \| X \|_{H^1}) =: \tilde{C}.
\end{align*}
The term $G_f(0,0)$ is just $E[f(X^T)]$ which is finite. So we get by the choice $C_{K,t_0} = \tilde{C} + |E[f(X^T)]|$ that $G_f$ is boundedness preserving.
\end{proof}

\begin{remark}
\label{rem:exampleLipschitzpath}
\begin{enumerate}
\item The Lipschitz continuity helps to show continuity and to apply dominated convergence. H\"{o}lder continuity as condition on the functions above works as well.

\item The property to be boundedness preserving is a local property; it depends on $t_0$. In the proof we have seen that under the conditions of Theorem \ref{thm:examplepathdeplipschitz} $G_f$ is even ``globally'' boundedness preserving.

\item By \citet[Corollary II.4.18]{JS03} the property ``without fixed times of continuity'' is for processes with independent increments equivalent to quasi-left-continuity of $X$.

\item The functions $g$ and $l$ from equations \eqref{eq:hfromLipschitzexample} and \eqref{eq:ffromLipchitzexample} provide the vertical and horizontal differentiability. If only one of the functions has the demanded properties, we still get $G_f \in C_b^{0,2}(\Lambda_T)$ or $G_f \in C_b^{1,0}(\Lambda_T)$.
\end{enumerate}
\end{remark}
 
We give an example for a semimartingale and the integral functional from Example \ref{ex:exampleregularintroductfuncito} which fulfill the conditions of Theorem \ref{thm:examplepathdeplipschitz}.

\begin{example}
\label{ex:integraloverLipschitzfunction}
Let $X$ be a compound Poisson process with finite $H^1$ norm. Then it has no fixed times of discontinuity, see \citet[II.4.3]{JS03}. Further, let $\tilde{f}: \RR \to \RR$ be Lipschitz continuous and twice continuously differentiable with Lipschitz continuous derivatives and let $f$ be the integral $f(\omega) := \int_0^T \tilde{f}(\omega_t) dt$. Assume that $E[|f(X^T)|] < \infty$.

Then $f$ is Lipschitz continuous in $\omega \in D([0,T],\RR)$. This is consequence of the Lipschitz continuity of $\tilde{f}$:
\begin{align*}
|f(\omega) - f(\omega')| \le &~ \int_0^T |\tilde{f}(\omega_t) - \tilde{f}(\omega'_t)| dt\\
\le &~ c \int_0^T |\omega_t - \omega'_t| dt\\
\le &~ c T \| \omega - \omega'\|_\infty.
\end{align*} 
Further, the function $g$ from equation \eqref{eq:hfromLipschitzexample} is twice continuously differentiable in zero. To see this fix $s \in [0,T]$ and $\omega \in D([0,T],\RR)$; then we have by Lipschitz continuity and dominated convergence
\begin{align*}
\frac{\partial}{\partial e} g(e,s,\omega)(0) =&~ \lim_{h \to 0} \frac{f(\omega + \ind_{[s,T]}h) - f(\omega)}{h}\\
= &~ \lim_{h \to 0} \frac{\int_0^T \tilde{f}(\omega_t + \ind_{[s,T]}h) - \tilde{f}(\omega_t) dt}{h}\\
=&~ \lim_{h \to 0} \frac{\int_s^T \tilde{f}(\omega_t + h) - \tilde{f}(\omega_t) dt}{h}\\
=&~ \int_s^T \tilde{f}'(\omega_t) dt.
\end{align*}
This expression is Lipschitz continuous in $\omega$ since we assumed $\tilde{f}'$ to be Lipschitz continuous.\\
Analoguously we get 
\begin{align*}
\frac{\partial^2}{\partial e^2} g(e,s,\omega)(0) = \int_s^T \tilde{f}''(\omega_t) dt,
\end{align*}
which is Lipschitz continuous in $\omega$ as well. Thus, $g$ fulfills the conditions of Theorem \ref{thm:examplepathdeplipschitz}.

For the function $l$ from \eqref{eq:ffromLipchitzexample} we show now the right differentiability. Therefore, fix $\omega,\omega' \in D([0,T],\RR)$, then
\begin{align*}
\frac{\partial^+}{\partial t} l(t,\omega,\omega') = &~ \lim_{h \downarrow 0} \frac{f(\omega \oplus_{t+h} \omega') - f(\omega \oplus_t \omega')}{h}\\
=&~ \lim_{h \downarrow 0} \frac{1}{h} \left (\int_0^{t+h} \tilde{f}(\omega_s) ds + \int_{t+h}^T \tilde{f}(\omega'_s - \omega'_{t+h} + \omega_t) ds - \int_0^t \tilde{f}(\omega_s) ds \right .\\
&~ \left . - \int_t^T \tilde{f}(\omega'_s - \omega'_t + \omega_t) ds \right )\\
=&~ \lim_{h \downarrow 0} \frac{1}{h} \left (\int_t^{t+h} \tilde{f}(\omega_s) - \tilde{f}(\omega'_s - \omega'_t + \omega_t) ds   \right .\\
&~ \left . + \int_{t+h}^T \tilde{f}(\omega'_s - \omega'_{t+h} + \omega_t) - \tilde{f}(\omega'_s - \omega'_t + \omega_t) ds\right )\\
=&~ \lim_{h \downarrow 0} \frac{1}{h} \left (\int_0^T \tilde{f}(\omega'_s - \omega'_{t+h} + \omega_t) - \tilde{f}(\omega'_s - \omega'_t + \omega_t) ds\right . \\
&~ - \int_0^{t+h} \tilde{f}(\omega'_s - \omega'_{t+h} + \omega_t) ds + \int_t^{t+h} \tilde{f}(\omega_s) - f(\omega'_s - \omega'_t + \omega_t) ds\\
&~ \left . + \int_0^t \tilde{f}(\omega'_s - \omega'_t + \omega_t) ds + \int_t^{t+h} \tilde{f}(\omega'_s - \omega'_t + \omega_t) ds \right )\\
=&~ \int_0^T \tilde{f}'(\omega'_s - \omega'_t + \omega_t) \frac{\partial^+}{\partial t} \omega'_t ds - \tilde{f}(\omega_t) \tilde{f}'(\omega_t) \frac{\partial^+}{\partial t} \omega'_t + \tilde{f}(\omega_t).
\end{align*}
The first term results from dominated convergence, the second term is the right derivative of the integral $\int_0^u \tilde{f}(\omega'_s - \omega'_u + \omega_t) ds$. For a compound Poisson process, the path $\omega' = X$ is right differentiable and it follows that on such paths $\cD G_f = \tilde{f}(\omega_t)$.\\
That $G_f$ is boundedness preserving follows as in the proof of Theorem \ref{thm:examplepathdeplipschitz}. Altogether we have that $G_f \in C_b^{1,2}(\Lambda_T)$.
\end{example}

From this example one can see that in this setting the function $l$ can cause problems for more general semimartingales since in the derivation a right derivative of the future path occurred. In fact this proceeding works fine for semimartingales of finite variation since they are differentiable almost everywhere. But since integrals over path independent functions of semimartingales are not of finite variation, we need other conditions for horizontal differentiability.

\begin{example}
\label{ex:examplebrownianmotionpath}
Let $B$ be a Brownian motion. We consider the function $f(\omega) := \int_0^T \tilde{f}(\omega_t) dt$ from Example \ref{ex:integraloverLipschitzfunction}. In contrast to the previous example we only assume that $\tilde{f}$ is bounded. Then we have by the Markov property and the strong continuity of the corresponding transition semigroup $(T_t)_{0 \le t \le T}$ that the transition semigroup is differentiable in time. It follows that $G_f$ is horizontally differentiable:
\begin{align*}
\cD G_f(t,\omega) = &~ \lim_{h \downarrow 0} \frac{E[f(\omega^t \oplus_{t+h} B^T) - f(\omega^t \oplus_t B^T)]}{h}\\
=&~ \lim_{h \downarrow 0} \frac{E[\int_0^T \tilde{f}((\omega^t \oplus_{t+h} B^T)_s) - \tilde{f}((\omega^t \oplus_t B^T)_s) ds]}{h}\\
=&~ \lim_{h \downarrow 0} \frac{\int_t^T E[\tilde{f}((\omega^t \oplus_{t+h} B^T)_s) - \tilde{f}((\omega^t \oplus_t B^T)_s)] ds}{h}\\
=&~ \lim_{h \downarrow 0} \frac{\int_t^T T_{(s-h)\wedge0}\tilde{f}(\omega_t) - T_s \tilde{f}(\omega_t) ds}{h}\\
=&~ \int_t^T \frac{\partial}{\partial s} T_s \tilde{f}(\omega_t) ds.
\end{align*}
\end{example}

The most important part in the proof of Theorem \ref{thm:examplepathdeplipschitz} is that we are able to reduce the conditional expectation to a normal expectation. This is a consequence of the independent increments. Since Markov processes have conditionally independent increments, we can obtain a similar result. In fact in the following proposition we derive for a Feller semimartingale $X$ under the assumption that $f$ is an integral function that $G_f \in C^{1,2}_b(\Lambda_T^d)$.

We recall the notion of Feller processes, cf. \citet{EK05}. A semigroup $(T_t)_{t \ge 0}$ on $C_0(\RR^d)$ is called a Feller semigroup if it is strongly continuous. In particular, Feller semigroups map $C_0(\RR^d)$ to $C_0(\RR^d)$. Sometimes also the bigger class $C_b(\RR^d)$ is used in the definition of Feller semigroups, we denote this by $C_b$-Feller semigroup. A Markov process is called a Feller process if the corresponding transition semigroup is a Feller semigroup. If $X$ is in addition a semimartingale we call it a Feller semimartingale. Note that by this definition Feller processes are always time-homogeneous.\\
Examples of $C_b$-Feller processes are L\'{e}vy processes. For L\'evy processes it holds that the transition semigroup is of the form
\begin{align*}
T_t f(x) = \int_\RR f(y+x) p_t(dy).
\end{align*}
Here $p_t$ is the distribution of $X_t$. From this equation we see that $T_t f$ inherits the boundedness and continuity of $f$.

\begin{proposition}
\label{prop:fellerpath}
Let $X$ be a $C_b$-Feller semimartingale with strongly continuous transition semigroup $(T_t)_{0 \le t \le T}$. Further, let $\tilde{f}: \RR \to \RR$ be bounded and continuous such that $T_t \tilde{f} \in C^{1,2}$. We consider the function $f: (D([0,T],\RR),\| \cdot \|_{sup}) \to \RR$ to be the integral functional $f(\omega) = \int_0^T \tilde{f}(\omega_t)dt$.
Then it holds that $G_f \in C^{1,2}_b(\Lambda_T)$.
\end{proposition}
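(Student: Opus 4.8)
\section*{Proof proposal}

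The plan is to exploit the Feller (Markov) structure exactly as in Examples \ref{ex:integraloverLipschitzfunction} and \ref{ex:examplebrownianmotionpath}: reduce the conditional expectation defining $G_f$ to an explicit expression in the transition semigroup, and then read off every required derivative from that expression. First I would split the time integral in $f$ at the stopping point $t$ and condition. Given $X^t=\omega^t$ the past contribution $\int_0^t\tilde f(\omega_s)\,ds$ is frozen, while for $s\ge t$ the Markov property together with time-homogeneity of a Feller process gives $E[\tilde f(X_s)\mid X^t=\omega^t]=T_{s-t}\tilde f(\omega_t)$. After the substitution $u=s-t$ this yields the representation
\[
G_f(t,\omega)=\int_0^t \tilde f(\omega_s)\,ds+\int_0^{T-t}T_u\tilde f(\omega_t)\,du .
\]
This formula is the heart of the argument: the path enters the second integral only through the stopped value $\omega_t$, so vertical differentiation reduces to differentiation of $x\mapsto T_u\tilde f(x)$, while horizontal differentiation reduces to the behaviour of the integration bounds.

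Second I would compute the three derivatives directly from this representation. Extending the path horizontally to $(t+h,\omega^t)$ keeps the frozen value $\omega_t$ and only shifts the limits; using the continuity of $u\mapsto T_u\tilde f(\omega_t)$ furnished by strong continuity of the semigroup one obtains
\[
\cD G_f(t,\omega)=\tilde f(\omega_t)-T_{T-t}\tilde f(\omega_t).
\]
Perturbing the stopped value by $e\,\ind_{[t,T]}$ changes the integrand of $\int_0^t\tilde f(\omega_s)\,ds$ only on a null set and affects solely the second integral, so differentiating under the integral sign in the space variable produces
\[
\nabla_\omega G_f(t,\omega)=\int_0^{T-t}(T_u\tilde f)'(\omega_t)\,du,\qquad
\nabla_\omega^2 G_f(t,\omega)=\int_0^{T-t}(T_u\tilde f)''(\omega_t)\,du,
\]
where the primes denote the $x$-derivatives that exist and are jointly continuous in $(u,x)$ by the hypothesis $T_u\tilde f\in C^{1,2}$.

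Third I would verify membership in $C^{1,2}_b(\Lambda_T)$ condition by condition. Joint $d_\infty$-continuity of $G_f$ and of the three derivatives follows because $\tilde f$, $T_{T-t}\tilde f$, $(T_u\tilde f)'$ and $(T_u\tilde f)''$ are continuous and the representations depend on the path only through the $d_\infty$-continuous quantities $\int_0^t\tilde f(\omega_s)\,ds$ and $\omega_t$; in particular left-continuity (membership in $C^{0,0}_l$) is immediate, and $\cD G_f$ is a continuous function of $\omega_t$ at each fixed time. For the boundedness-preserving property I would fix a compact $K\subset\RR$ and $t_0<T$: since $\tilde f$ is bounded and the Feller transition semigroup is a contraction, $\|T_u\tilde f\|_\infty\le\|\tilde f\|_\infty$, which bounds $\cD G_f$; and since $(u,x)\mapsto(T_u\tilde f)'(x)$, $(u,x)\mapsto(T_u\tilde f)''(x)$ are continuous, they are bounded on the compact set $[0,T]\times K$, which bounds $\nabla_\omega G_f$ and $\nabla_\omega^2 G_f$ uniformly over the paths with $\omega([0,t])\subset K$.

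The step I expect to be the main obstacle is the rigorous justification of interchanging the difference-quotient limits with the integrals $\int_0^{T-t}\cdots\,du$ when computing the vertical and horizontal derivatives, since this requires control of the difference quotients of $x\mapsto T_u\tilde f(x)$ that is uniform in $u$. This is exactly what the joint $C^{1,2}$-regularity of $(u,x)\mapsto T_u\tilde f(x)$ supplies, together with strong continuity of the semigroup for the horizontal limit; once this uniform control is in place, the remaining continuity and boundedness verifications are routine applications of dominated convergence and the contraction property of the semigroup.
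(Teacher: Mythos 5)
Your proposal is correct and follows essentially the same route as the paper: both derive the representation $G_f(t,\omega)=\int_0^t\tilde f(\omega_s)\,ds+\int_0^{T-t}T_u\tilde f(\omega_t)\,du$ from the Markov property and time-homogeneity, read off $\cD G_f(t,\omega)=\tilde f(\omega_t)-T_{T-t}\tilde f(\omega_t)$ and $\nabla_\omega^k G_f(t,\omega)=\int_0^{T-t}\partial_x^k T_u\tilde f(\omega_t)\,du$ from it, and obtain continuity and the boundedness-preserving property from the boundedness and continuity of $\tilde f$ and $T_u\tilde f$. The only difference is presentational: the paper spells out the sequential-continuity estimates in detail where you summarize them, but the underlying argument is the same.
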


\begin{proof}
By the time-homogeneity and the Markov property we obtain
\begin{align}
\begin{split}
\label{eq:examplepathdepfeller}
G_f(t,\omega) =&~ E[f(X^T)|X^t = \omega^t]\\ 
=&~ E\left[ \left . \int_t^T \tilde{f}(X_s) ds \right | X_t = \omega_t \right] + \int_0^t \tilde{f}(\omega_s) ds\\
=&~ E\left[ \left . \int_0^{T-t} \tilde{f}(X_s) ds \right | X_0 = \omega_t \right] + \int_0^t \tilde{f}(\omega_s) ds\\
=&~ \int_0^{T-t} T_s \tilde{f}(\omega_t) ds + \int_0^t \tilde{f}(\omega_s) ds.
\end{split}
\end{align}
We show the continuity of $G_f$ by sequential continuity. Let $(t^n,\omega^n)$ converge in $\Lambda_T$ to $(t,\omega)$. Then we have 
\begin{align*}
\MoveEqLeft |G_f(t,\omega) - G_f(t^n,\omega^n)|\\
= &~ \left |E\left[ \left .\int_0^{T-t} \tilde{f}(X_s) ds\right | X_0 = \omega_t \right] - E\left[ \left .\int_0^{T-t^n} \tilde{f}(X_s) ds\right | X_0 = \omega^n_{t^n} \right] \right .\\
&~ \left .+ \int_0^t \tilde{f}(\omega_s) ds - \int_0^{t^n} \tilde{f}(\omega^n_s) ds \right |\\
=&~ \left |\int_0^{T-(t \vee t^n)} E\left[ \left .\tilde{f}(X_s)\right | X_0 = \omega_t \right] - E\left[\left . \tilde{f}(X_s) \right | X_0 = \omega^n_{t^n} \right] ds \right .\\
&~ + \int_{T - (t \vee t^n)}^{T -(t \wedge t^n)} E\left[ \left .\tilde{f}(X_s)\right | X_0 = \omega_t \right] \ind_{\{t \ge t^n\}} - E\left[ \left .\tilde{f}(X_s) \right | X_0 = \omega^n_{t^n} \right] \ind_{\{t^n \ge t\}} ds\\
&~\left . + \int_0^{t \wedge t^n} \tilde{f}(\omega_s) - \tilde{f}(\omega^n_s) ds + \int_{t \wedge t^n}^{t \vee t^n} \tilde{f}(\omega_s)\ind_{\{t \ge t^n\}} - \tilde{f}(\omega^n_s)\ind_{\{t^n \ge t\}} ds\right |
\end{align*}
We first take a closer look at the integrals not depending on $X$. 
\begin{align*}
\left |\int_0^{t \wedge t^n} \tilde{f}(\omega_s) - \tilde{f}(\omega^n_s) ds \right | \le &~ \int_0^{t \wedge t^n} |\tilde{f}(\omega_s) - \tilde{f}(\omega^n_s)| ds\\
\le& ~ \int_0^t |\tilde{f}(\omega_s) - \tilde{f}(\omega^n_s)| ds.
\end{align*}
This converges to zero by dominated convergence using the continuity of $\tilde{f}$. Let $c$ be the bound of $\tilde{f}$, then we have
\begin{align*}
\left |\int_{t \wedge t^n}^{t \vee t^n} \tilde{f}(\omega_s)\ind_{\{t \ge t^n\}} - \tilde{f}(\omega^n_s)\ind_{\{t^n \ge t\}} ds\right | \le &~ \int_{t \wedge t^n}^{t \vee t^n} \left |\tilde{f}(\omega_s)\ind_{\{t \ge t^n\}} - \tilde{f}(\omega^n_s)\ind_{\{t^n \ge t\}}\right | ds\\
\le &~ c (t \vee t^n - t \wedge t^n).
\end{align*}
This tends to zero by assumption. Next we turn to the terms containing $X$. For the first term we obtain
\begin{align*}
\left |\int_0^{T-(t \vee t^n)} E\left[ \left .\tilde{f}(X_s)\right | X_0 = \omega_t \right] - E\left[ \left .\tilde{f}(X_s) \right | X_0 = \omega^n_{t^n} \right] ds \right |\\
\le \int_0^{T-t} \left | E\left[ \left .\tilde{f}(X_s)\right | X_0 = \omega_t \right] - E\left[ \left.\tilde{f}(X_s) \right | X_0 = \omega^n_{t^n} \right]\right | ds
\end{align*}
This converges to zero since $E\left[ \left .\tilde{f}(X_s)\right | X_0 = \omega_t \right]$ is continuous in $\omega$ and bounded by the Feller property.

The last term tends to zero as follows. Let $\tilde{c}$ be the bound of $E\left[ \left .\tilde{f}(X_s)\right | X_0 = \omega_t \right]$ which exists by the Feller property. Then
\begin{align*}
\left |\int_{T - (t \vee t^n)}^{T -(t \wedge t^n)} E\left[ \left .\tilde{f}(X_s)\right | X_0 = \omega_t \right] \ind_{\{t \ge t^n\}} - E\left[ \left .\tilde{f}(X_s) \right | X_0 = \omega^n_{t^n} \right] \ind_{\{t^n \ge t\}} ds\right |\\
\le \tilde{c} (T - (t \vee t^n) - T +(t \wedge t^n)) \to 0.
\end{align*}
We now turn to the vertical differentiability of $G_f$. 
\begin{align*}
\frac{G_f(t,\omega^{h,t}) - G_f(t,\omega)}{h} = &~ \frac{1}{h} \left ( E\left[ \left .\int_0^{T-t} \tilde{f}(X_s) ds\right | X_0 = \omega_t + h \right] + \int_0^t \tilde{f}(\omega_s) ds \right .\\
&~ \left . - E\left[ \left .\int_0^{T-t} \tilde{f}(X_s) ds\right | X_0 = \omega_t \right] - \int_0^t \tilde{f}(\omega_s) ds \right )\\
=&~ \frac{1}{h} \left ( \int_0^{T-t} E\left[ \left .\tilde{f}(X_s) \right | X_0 = \omega_t + h \right] - E \left [ \left .\tilde{f}(X_s) \right | X_0 = \omega_t \right] ds \right )\\
=&~ \frac{1}{h} \left ( \int_0^{T-t} T_s \tilde{f}(\omega_t + h) - T_s \tilde{f}(\omega_t) ds \right )
\end{align*}
Since $T_t \tilde{f} \in C^{1,2}$ by assumption, we obtain
\begin{align*}
\nabla_{\omega} G_f(t,\omega) = \int_0^{(T-t)} \frac{\partial}{\partial x} T_s \tilde{f}(\omega_t)ds.
\end{align*}
Analog we receive for the second derivative
\begin{align*}
\nabla^2_{\omega} G_f(t,\omega) = \int_0^{(T-t)} \frac{\partial^2}{\partial x^2} T_s \tilde{f}(\omega_t)ds.
\end{align*}
To compute the horizontal derivative we take a look at the horizontal differential quotient.
\begin{align*}
\frac{G_f(t+h,\omega^t) - G_f(t,\omega)}{h} =&~ \frac{1}{h} \left ( E\left[ \int_0^{T-t-h} \tilde{f}(X_s) ds| X_0 = \omega_t \right] + \int_0^t \tilde{f}(\omega_s) ds + h \tilde{f}(\omega_t)\right .\\
&~ \left . - E\left[ \int_0^{T-t} \tilde{f}(X_s) ds| X_0 = \omega_t \right] - \int_0^t \tilde{f}(\omega_s) ds \right )\\
=&~ \tilde{f}(\omega_t) - \frac{1}{h} \int_{T-t-h}^{T-t} T_s \tilde{f}(\omega_t) ds\\
\to&~ \tilde{f}(\omega_t) - T_{T-t} \tilde{f}(\omega_t).
\end{align*} 
It remains to show that $G_f$ is boundedness preserving. This follows directly from the representation \eqref{eq:examplepathdepfeller} since $\tilde{f}$ and $T_t \tilde{f}$ are both bounded by the Feller property.
\end{proof}

Next we consider functions which depend of the average of a semimartingale $X$. Such functions are used in financial mathematics in the framework of Asian options.

\begin{example}
\label{ex:exampleasian}
Let $X$ be a semimartingale of finite variation with independent increments, finite $H^1$ norm and without fixed times of discontinuity. Further, define $I_t := \int_0^t X_s ds$. We consider a function of the form $\tilde{f}(\frac{1}{T} I_T)$, where $\tilde{f} : \RR \to \RR$ is an integrable function. We assume that $\tilde{f}$ is twice differentiable with Lipschitz continuous derivatives. The path-dependent function corresponding to $\tilde{f}$ is
\begin{align*}
f : D([0,T],\RR) &\to \RR\\
\omega~~~ &\mapsto \tilde{f} \left(\frac{1}{T} \int_0^T \omega_t dt \right).
\end{align*} 
Since the identity on $\RR$ is Lipschitz continuous, we get as in Example \ref{ex:integraloverLipschitzfunction} that $\frac{1}{T} I_T$ is Lipschitz continuous in $\omega$. It follows that $f$ is Lipschitz continuous in $\omega$ as well. Denote by $c_{\tilde{f}}$ the Lipschitz constant of $\tilde{f}$ and by $c_I$ the Lipschitz constant of $\frac{1}{T} I_T$. Then it follows
\begin{align*}
|f(\omega) - f(\omega')| =&~ \left |\tilde{f} \left(\frac{1}{T} \int_0^T \omega_t dt \right) - \tilde{f} \left(\frac{1}{T} \int_0^T \omega'_t dt \right) \right | \\
\le&~ c_{\tilde{f}} \left |\frac{1}{T} \int_0^T \omega_t dt - \frac{1}{T} \int_0^T \omega'_t dt \right | \le c_{\tilde{f}} c_I \| \omega - \omega' \|.
\end{align*}
We consider the function $g$ from equation \eqref{eq:hfromLipschitzexample}. Fix $s \in [0,T]$ and $\omega \in D([0,T],\RR)$, then we get for the derivative

\begin{align*}
\frac{\partial}{\partial e} g(e,s,\omega)(0) =&~ \lim_{h \to 0} \frac{f(\omega + \ind_{[s,T]}h) - f(\omega)}{h}\\
=&~ \lim_{h \to 0} \frac{\tilde{f} \left(\frac{1}{T} \int_0^T \omega_t dt + \frac{T-s}{T} h \right) - \tilde{f} \left(\frac{1}{T} \int_0^T \omega_t dt \right)}{h}\\
=&~ \frac{T-s}{T} \tilde{f}'\left(\frac{1}{T} \int_0^T \omega_t dt \right).
\end{align*}

This is Lipschitz continuous in $\omega$ by the same argument as above. For the second derivative we get

\begin{align*}
\frac{\partial^2}{\partial e^2} g(e,s,\omega)(0) = \frac{(T-s)^2}{T^2}\tilde{f}'' \left( \frac{1}{T} \int_0^T \omega_t dt \right ).
\end{align*}

So $g$ meets the conditions of Theorem \ref{thm:examplepathdeplipschitz}. For the horizontal derivative, we get

{\small
\begin{align*}
\MoveEqLeft \cD G_f(t,\omega)\\ 
=&~ \lim_{h \downarrow 0} \frac{E\left [f(\omega^t \oplus_{t+h} X^T) - f(\omega^t \oplus_t X^T)\right ]}{h}\\
=&~ \lim_{h \downarrow 0} \frac{1}{h} E\left [\tilde{f}\left (\frac{1}{T} \left (\int_0^t \omega_s ds + h \omega_t + \int_{t+h}^T X_s - X_{t+h} + \omega_t ds \right ) \right )\right .\\
&~~~~~~~~~~~~~~ \left .- \tilde{f} \left ( \frac{1}{T} \left ( \int_0^t \omega_s ds + \int_t^T X_s - X_t + \omega_t ds\right ) \right )\right ]\\
=&~ \lim_{h \downarrow 0} \frac{1}{h} E\left [\tilde{f}\left (\frac{1}{T} \left (\int_0^t \omega_s ds + \int_t^T X_s + \omega_t ds - (T-t-h) X_{t+h} - \int_t^{t+h} X_s ds \right ) \right )\right .\\
&~~~~~~~~~~~~~~ \left .- \tilde{f} \left ( \frac{1}{T} \left ( \int_0^t \omega_s ds + \int_t^T X_s + \omega_t ds - (T-t) X_t \right ) \right )\right ].
\end{align*}}

We set $\int_0^t \omega_s ds + \int_t^T X_s + \omega_t ds := x$ and obtain by dominated convergence

\begin{align*}
\MoveEqLeft \cD G_f(t,\omega)\\
 =&~ E\left [\lim_{h \downarrow 0} \frac{1}{h}\tilde{f}\left (\frac{1}{T} \left (x - (T-t-h) X_{t+h} - \int_t^{t+h} X_s ds \right ) \right )\right .\\
&~~~~~~ \left .- \tilde{f} \left ( \frac{1}{T} \left ( x - (T-t) X_t \right ) \right )\right ].
\end{align*}

This can be further computed as

\begin{align*}
\MoveEqLeft \cD G_f(t,\omega)\\ 
=&~ E\left [\lim_{h \downarrow 0} \frac{1}{h}\tilde{f}\left (\frac{1}{T} \left (x - (T-t) X_t - (T-t)(X_{t+h} - X_t) + h X_{t+h} - \int_t^{t+h} X_s ds \right ) \right )\right .\\
&~~~~~~ \left .- \tilde{f} \left ( \frac{1}{T} \left ( x - (T-t) X_t \right ) \right )\right ]\\
=&~ E\left [\tilde{f}'(x - (T-t) X_t) T \frac{\partial^+}{\partial t} X_t \right ].
\end{align*}
\end{example}

Again we see that in the horizontal derivative a right derivative of the path occurs. This is the reason to restrict to finite variation semimartingales. In this case the Markov property does not help since for a Markov process $X$ the functional $G_f$ reduces to
\begin{align*}
G_f(t,\omega) =&~ E\left [\left . \tilde{f}\left (\frac{1}{T} I_T\right ) \right | X^t = \omega^t \right ]\\
=&~ E\left [ \left .\tilde{f} \left (\frac{1}{T} \left (\int_0^t \omega_s ds + \int_t^T X_s ds\right )\right ) \right | X_t = \omega_t  \right ]. 
\end{align*}
This representation does not allow to use the transition operators since the function in the conditional expectation depends on the whole path after $t$, whereas the transition operators only depend on the process at $t$.

The comparison results in Section \ref{sec:pathdepcomp} also need vertical convexity, vertical directional convexity and vertical monotonicity. In particular independent increments are useful to establish these properties as shown in the following example.

\begin{example}
\label{ex:exampleG_fverticallyconvex}
Let $X$ be a semimartingale with independent increments. Then $G_f$ is of the form $G_f(t,\omega) = E[f(\omega \oplus_t X^T)]$ (see Theorem \ref{thm:examplepathdeplipschitz}). To establish vertical convexity, we need to show that $G_f(t,\omega + e \ind_{[t,T]})$ is convex as function in $e$ in a neighbourhood of $0$. For the functional $f(\omega) := \int_0^T \tilde{f}(\omega_t) dt$ as in Example \ref{ex:integraloverLipschitzfunction}, we obtain
\begin{align*}
G_f(t, \omega + e \ind_{[t,T]}) =&~ E\left [f(\omega \oplus_t X^T + e \ind_{[t,T]})\right ] = \int_0^t \tilde{f}(\omega_s) ds + E\left[\int_t^T \tilde{f}(X_s +e) ds\right ].
\end{align*}
Thus, if $\tilde{f}$ is convex or $f$ is vertically convex, we obtain that $G_f$ is vertically convex. Analog statements hold for directional convexity and monotonicity.
\end{example}

We finally apply the regularity results in this section to obtain a comparison result for a path-dependent function between a L\'evy process and an It\^o semimartingale. Concretely the following example is based on Theorem \ref{thm:orderingpathidcxp}.

\begin{example}
Let $X$ be a type C L\'{e}vy process (see \citet{Sa99}) with L\'evy triplet $(b,c^2,K)$ and let $\tilde{f}: \RR \to \RR$ be a bounded, continuous, increasing directionally convex function. Then we have by Proposition \ref{prop:fellerpath} that for $f(\omega) = \int_0^T \tilde{f}(\omega_t)dt$ the functional $G_f$ is in $C^{1,2}_b(\Lambda_T)$. Further, $G_f$ is vertically directionally convex and vertically increasing by Example \ref{ex:exampleG_fverticallyconvex}. So condition $(i)$ of Theorem \ref{thm:orderingpathidcxp} is fulfilled.\\
We compare $X$ to an It\^o semimartingale $Y$ with differential characteristics $(\beta,\delta^2,\eta)$. Here $\beta$ is an adapted process which is integrable with respect to the identity, $\delta$ is an adapted process which is integrable with respect to the Brownian motion and $\eta$ is such that $\nu(dt,dx) := dt \eta_t(dx)$ is the compensator of $\mu^Y$. \\
Since $X$ is a type C L\'evy process we have that $\supp(P^{X_t}) = \RR$ for all $t$. Hence, by the choice of $f$ we have that for all $\omega \in \RR^{[0,T]}$
\begin{align*}
\bar{U}_t G_f(t,\omega^t) = 0.
\end{align*}
It follows that the generalized Kolmogorov backwards equation $\bar{U}_t G_f(t,Y^{t^-}) = 0$ holds for the path of $Y$ and consequently condition $(ii)$ is fulfilled.\\
If Assumptions~$(iii)$ and $(iv)$ are imposed, we get from the $dt \times P$ almost sure ordering of the differential characteristics
\begin{align*}
\beta_t \le&~ b,\\
\delta_t \le&~ c,\\
\int_\RR H_{G_f}(t,Y^{t^-},x) \eta_t(dx) \le&~ \int_\RR H_{G_f}(t,Y^{t^-},x) K(dx),
\end{align*}
that
\begin{align*}
E\left [f(Y^T)\right ] \le E\left [f(X^T)\right ],
\end{align*} 
i.e. the comparison of the path-dependent function is valid.
\end{example}

\bibliographystyle{chicago}

\end{document}